\title{Degree-1 maps and rank inequalities in Heegaard Floer homology}
\author{Fraser Binns}
\address{Department of Mathematics, Princeton University}
\email{fb1673@princeton.edu}
\author{Sudipta Ghosh}
\address{Department of Mathematics, University of Notre Dame}
\email{sghosh7@nd.edu}
\keywords{degree-1 map, immersed curve invariants, Heegaard Floer homology}
\subjclass{57R58}
\thanks{FB was supported by the Simons Grant {\em New structures in low-dimensional topology}}
\begin{document}

\begin{abstract}Ghosh-Sivek-Zentner constructed degree-1 maps from certain rational homology solid tori to the twisted $I$-bundle over the Klein bottle. We show that these maps yield rank inequalities for Heegaard Floer homology. To do so, we use Hanselman-Rasmussen-Watson's immersed curve interpretation of bordered Floer homology, extending their proof of a similar rank inequality corresponding to degree-1 maps to the solid torus. Our result provides further evidence for Kronheimer-Mowka's conjectured relationship between Heegaard Floer homology and instanton Floer homology.
\end{abstract}
\maketitle
\section{Introduction}

Consider $M_1,M_2$ a pair of compact, connected oriented $n$-manifolds with boundary. Recall that  ${H_n(M_i, \partial M_i;\Z)\cong\Z}$ comes equipped with a canonical generator $[\omega_i]$. A \emph{degree-1 map}  is a continuous map $f: M_1 \to M_2$ such that $f_*([\omega_1])=[\omega_2]$. In general, degree-1 maps are difficult to construct or classify, but for surfaces there is a complete classification due to Edmonds~\cite[Theorem 3.1]{Edmondsdegree1}. The situation for $3$-manifolds is more complicated; see~\cite{MR4138738, MR3177908} for some relevant results. The degree-1 maps relevant for this paper are constructed in forthcoming work of Ghosh, Sivek and Zentner~\cite{GSZdegree-1}. To describe them, recall that the half-lives half-dies principle implies that, up to a change of orientation, there is a unique simple closed curve in $\partial M$ of finite order in $H_1(M;\Z)$. These curves are called the \emph{rational longitudes of $M$}. For each manifold with torus boundary with rational longitudes of order $n$, $M$, Ghosh-Sivek-Zentner construct a degree-1 map from $M$ to a manifold $E_n$ constructed as follows: consider the planar surface
\[ P_n := S^2 \setminus \bigsqcup_{i=1}^n \mathring{D}^2 \]
given by the $2$-sphere with $n$ disjoint open disks removed.  Taking these disks to be centered at evenly spaced points around the equator of $S^2$,  consider the homeomorphism
\[ \phi_n: P_n \to P_n \]
given by $\frac{2\pi}{n}$ rotation about the axis through the north and south poles, as shown in Figure~\ref{fig:monodromy-phi}.  Define
\[ E_n := \frac{P_n \times [0,1]}{(x,1) \sim (\phi_n(x),0)}, \]
i.e. $E_n$ is the mapping torus of $\phi_n$. These manifolds have also been studied by Boyer-Clay~\cite[Section 2.2.3]{Boyerclay} and Hansleman-Watson~\cite[Section 7]{HanselmanWatson}, who computed their bordered Floer homology. Ghosh-Sivek-Zentner's result is as follows:

\begin{figure}
\begin{tikzpicture}
\draw[fill=gray!5] (0,0) circle (2);
\draw[thin,densely dashed] (-2,0) arc (180:0:2 and 0.4);
\foreach \i in {131,59} {
  \draw[fill=white] (\i:2 and 0.4) circle (0.1);
}
\draw[thin,-latex] (0,-2.25) -- (0,2.5);
\draw[-latex] (0,2.25) ++(60:0.4 and 0.1) arc (60:-260:0.4 and 0.1);
\node[right] at (0.4,2.25) {\footnotesize$\frac{2\pi}{n}$};
\begin{scope}
\clip (0,0) circle (2);
\path[save path=\front] (0,0) circle (2) \foreach \i in {203,275,347} { (\i:2 and 0.4) circle (0.1) };
\draw[fill=gray!20, fill opacity=0.6, even odd rule, use path=\front];
\begin{scope}[even odd rule]
  \clip[use path=\front];
  \draw[thin] (-2,0) arc (180:360:2 and 0.4);
\end{scope}
\end{scope}
\end{tikzpicture}
\caption{The monodromy $\phi_n: P_n \to P_n$.}\label{fig:monodromy-phi}
\end{figure}

\begin{theorem}[\cite{GSZdegree-1}]\label{thm:pinch} 
Suppose $M$ is a compact, oriented 3-manifold with torus boundary and a rational longitude $\lambda$ of order $n$. 
Then there is a degree-1 map $f: M \to E_n  $
that restricts to a homeomorphism $\partial M \xrightarrow{\cong} \partial E_n$ sending $\lambda$ to a rational longitude of $E_n$.
\end{theorem}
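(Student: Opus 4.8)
The plan is to generalize the classical construction of a degree-$1$ map from a $3$-manifold with a null-homologous slope to the solid torus (the case $n=1$, used by Hanselman--Rasmussen--Watson): cut $M$ along a spanning surface for parallel copies of $\lambda$, pinch a tubular neighborhood of that surface onto a fiber $P_n$ of $E_n$, and map the complementary piece onto the rest of $E_n$.

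\emph{Producing the spanning surface.} Let $\mu\subset\partial M$ be a curve dual to $\lambda$. The half-lives-half-dies principle, together with the hypothesis that $\lambda$ has order $n$, gives $\ker\big(i_*\colon H_1(\partial M;\Z)\to H_1(M;\Z)\big)=\Z\cdot n\lambda$, and hence, via the long exact sequence of $(M,\partial M)$ and Lefschetz duality, $\operatorname{im}\big(H^1(M;\Z)\to H^1(\partial M;\Z)\big)=\Z\cdot n\mu^{*}$. Any $\alpha\in H^1(M;\Z)$ restricting to $n\mu^{*}$ is therefore primitive, so it is Poincaré dual to a connected properly embedded surface $F\subset M$ whose boundary is exactly $n$ coherently oriented parallel copies of $\lambda$; fix a Morse representative $h\colon M\to S^1$ of $\alpha$ with $F=h^{-1}(\mathrm{pt})$. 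Separately, decompose $E_n=V_1\cup V_2$, where $V_1\cong P_n\times[0,1]$ is a fiber neighborhood and $V_2\cong P_n\times[0,1]$ is its complementary piece, a genus-$(n-1)$ handlebody, with the monodromy $\phi_n$ absorbed into one of the two gluing maps.

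\emph{Building the map.} On $N(F)\cong F\times[0,1]$ take $c\times\mathrm{id}$, where $c\colon F\to P_n$ is a degree-$1$ map collapsing the genus of $F$ and restricting to a homeomorphism $\partial F\xrightarrow{\cong}\partial P_n$; this carries $N(F)$ onto $V_1$. What remains is to extend it to a degree-$1$ map $M_F\to V_2$ of the cut-open manifold $M_F$, agreeing on $\partial M_F$ with the prescribed data---$c$ on one copy of $F$, $\phi_n\circ c$ on the other, and a homeomorphism on each of the $n$ annuli of $\partial M\setminus N(\partial F)$. A bookkeeping check, tracking how these side annuli join the two copies of $\partial F$ and using that $\phi_n$ cyclically permutes the boundary circles of $P_n$, shows this boundary map is well defined, restricts to a homeomorphism $\partial M\xrightarrow{\cong}\partial E_n$ after gluing, and has degree $1$ as a map of closed surfaces. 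Granting the extension $M_F\to V_2$, it is automatically of degree $1$: its boundary restriction has degree $1$, and $H_3(W,\partial W)\xrightarrow{\cong}H_2(\partial W)$ for any compact oriented $3$-manifold $W$ with connected boundary, so naturality of the fundamental class does the rest. Gluing the two pieces yields $f\colon M\to E_n$; its degree, computed at a regular value in $\operatorname{int}(V_2)$ all of whose preimages lie in $M_F$, equals $1$; it restricts to a homeomorphism of boundaries; and it sends $\lambda$ to a boundary circle of the fiber $P_n$, which is a rational longitude of $E_n$.

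\emph{The main obstacle.} Everything above is routine except the existence of the extension $M_F\to V_2$. For $n=1$ the piece $V_2$ is a $3$-ball, hence contractible, so any degree-$1$ boundary map extends and there is nothing to check; but for $n\ge 2$ the handlebody $V_2=P_n\times[0,1]$ is merely aspherical with free fundamental group, and obstruction theory leaves exactly one obstruction---the prescribed boundary map must send $\ker\big(\pi_1(\partial M_F)\to\pi_1(M_F)\big)$ into the trivial subgroup of $\pi_1(V_2)=F_{n-1}$. Arranging this is the heart of the argument: one must choose the spanning surface $F$ (for instance incompressible and boundary-incompressible) and the pinch map $c$ so that the collapsing is compatible with the ``Lagrangian'' subgroup $\ker\big(\pi_1(\partial M_F)\to\pi_1(M_F)\big)$ supplied by Poincar\'e--Lefschetz duality. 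I expect this compatibility to be where the real work lies, the rest being the assembly described above.
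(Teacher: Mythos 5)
First, a caveat: the paper does not prove Theorem~\ref{thm:pinch} at all --- it is quoted from the forthcoming work \cite{GSZdegree-1} --- so there is no in-paper argument to measure you against, and your outline must be judged on its own. The routine parts of your outline are fine: half-lives-half-dies produces a connected properly embedded surface $F$ with $\partial F$ equal to $n$ coherently oriented copies of $\lambda$; cutting $E_n$ along a fiber exhibits it as $V_1\cup V_2$ with $V_1\cong P_n\times[0,1]$ a fiber neighborhood and $V_2\cong P_n\times[0,1]$ a genus-$(n-1)$ handlebody; the pinch $c\times\mathrm{id}$ on $N(F)$ is the right thing to do; and your observation that any extension $M_F\to V_2$ whose boundary restriction has degree $1$ is itself of degree $1$ (via $H_3(W,\partial W)\cong H_2(\partial W)$ for connected boundary, and $\partial M_F$ is indeed connected) is correct.

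However, there is a genuine gap exactly where you flag it: you never construct the extension $M_F\to V_2$, and for $n\ge 2$ that extension \emph{is} the theorem --- everything else is assembly. Moreover, your statement of the obstruction is not quite right. Since $V_2$ is aspherical, an extension exists if and only if the prescribed homomorphism $\pi_1(\partial M_F)\to\pi_1(V_2)\cong F_{n-1}$ factors through $\pi_1(M_F)$; annihilating $\ker\big(\pi_1(\partial M_F)\to\pi_1(M_F)\big)$ is necessary but not sufficient, because $\pi_1(\partial M_F)\to\pi_1(M_F)$ is in general far from surjective (it already fails to be surjective for nontrivial knot exteriors), so one must additionally extend the induced homomorphism from the image subgroup to all of $\pi_1(M_F)$. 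For $n=2$ the target is $\Z$, so the factorization is a purely cohomological condition that one can hope to check by duality (this is essentially the case handled in \cite{ghosh2023rational}); but for $n\ge 3$ the target is a nonabelian free group, and there is no evident reason that the required homomorphism exists for an arbitrary choice of $F$ and pinch $c$ --- nor is it clear that choosing $F$ incompressible and boundary-incompressible helps. Arranging this compatibility (or replacing the construction by one that sidesteps it) is the real content, and the proposal offers no argument for it; the ``bookkeeping check'' matching the side annuli to the cyclic monodromy $\phi_n$ is also asserted rather than verified, though that part is believable. So: plausible skeleton, decisive step missing.
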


The $n=1$ case of this theorem is the classical result that an integer homology solid torus $M$ admits a degree-1 map to the solid torus $S^1 \times D^2$. The $n=2$ case, where the target manifold is the twisted $I$-bundle over the Klein bottle, was proven in~\cite[Proposition~1.9]{ghosh2023rational}. Theorem~\ref{thm:pinch} can be used to produce degree-$1$ maps between certain closed toroidal $3$-manifolds as follows. If $Y= M \cup_{h} N$ is a $3$-manifold where $N$ has rational longitudes of order $n$, then $Y$ admits a a degree-1 map to $Y_1= M \cup_h E_n$ given by the identity on $M$ and $f$ on $N$. We call such a map an \emph{$n$-pinch}.

The relationships between degree-1 maps and classical invariants are well understood. In particular, a degree-$1$ map induces a surjection on fundamental groups. Consequently, if there is a degree-1 map $f \colon M \to N$, then every $\SU(2)$-representation of $\pi_1(N)$ lifts to an $\SU(2)$-representation of $\pi_1(M)$. Modulo a non-degeneracy assumption, the generators of the instanton Floer chain complex correspond to $\SU(2)$-representations of $\pi_1(Y)$. Therefore, in the instanton Floer setting, it is reasonable to expect that a degree-1 map imposes an inequality between the number of generators of the chain complexes of the domain and target. On the other hand, in~\cite[Conjecture 7.24]{kronheimer2010knots}, Kronheimer and Mrowka conjecture that a version of instanton Floer homology 
is isomorphic to a version of Heegaard Floer homology, an invariant due to Ozsv\'ath-Szab\'o~\cite{ozsvath2004holomorphic}:
\begin{equation} \label{Conj: KM}
 I^{\#}(Y;\C) \cong \widehat{\HF}(Y;\Z) \otimes \mathbb{C}.
 \end{equation} As is natural in light of the preceding discussion, 
 Hanselman, Rasmussen and Watson asked if degree-$1$ maps induce rank inequalities between the Heegaard Floer homologies of the corresponding manifolds~\cite[Question~1.13]{hanselman2016bordered}. 
They confirmed this conjecture for $1$-pinches~\cite[Theorem 1.12]{hanselman2016bordered}. Our goal in this paper is to verify the $n=2$ case:

\begin{thm}\label{Thm:main}
   If $M$ is obtained from a rational homology sphere $N$ by a $2$-pinch, then ${\rank(\widehat{\HF}(M))\leq \rank(\widehat{\HF}(N))}$.
\end{thm}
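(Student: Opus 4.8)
The plan is to pass to Hanselman--Rasmussen--Watson's immersed-curve invariants and to reduce the theorem to a purely geometric inequality about minimal intersection numbers in a punctured torus, extending their argument for $1$-pinches. By the definition of a $2$-pinch together with Theorem~\ref{thm:pinch}, we may write $N = Z \cup_h P$ and $M = Z \cup_h E_2$, where $P$ is a compact oriented $3$-manifold with torus boundary whose rational longitude $\lambda$ has order $2$, and where $h$ identifies $\lambda$ with the rational longitude of $E_2$; thus $\gamma(P)$ and $\gamma(E_2)$ live in the same punctured torus $T = \partial Z \setminus\{z\}$ and share the straight line $\ell_\lambda$ of slope $\lambda$. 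The pairing theorem of~\cite{hanselman2016bordered} gives $\rank\widehat{\HF}(N) = \dim \HF(\gamma(Z),\gamma(P))$ and $\rank\widehat{\HF}(M) = \dim \HF(\gamma(Z),\gamma(E_2))$. Since $N$ is a rational homology sphere, a Mayer--Vietoris computation shows that $Z$ is a rational homology solid torus whose rational longitude is \emph{not} $\lambda$; in particular $\gamma(Z)$ has no component parallel to $\ell_\lambda$, so both pairings are genuine transverse Lagrangian intersections whose dimensions equal minimal geometric intersection numbers $\iota(-,-)$, counted with local-system multiplicities. It therefore suffices to prove $\iota(\gamma(Z),\gamma(P)) \geq \iota(\gamma(Z),\gamma(E_2))$.

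By the computation of Hanselman--Watson~\cite[Section~7]{HanselmanWatson} (see also Boyer--Clay~\cite{Boyerclay}), $\gamma(E_2)$ is a single embedded curve isotopic to $\ell_\lambda$ carrying the unique indecomposable rank-$2$ local system $\zeta$. Since $\gamma(Z)$ is transverse to $\ell_\lambda$, tensoring with $\zeta$ merely doubles each intersection point, so $\iota(\gamma(Z),\gamma(E_2)) = 2\,\iota(\gamma(Z),\ell_\lambda)$, and the theorem reduces to
\begin{equation}\label{eq:keyineq}
\iota(\gamma(Z),\gamma(P)) \ \geq\ 2\,\iota(\gamma(Z),\ell_\lambda).
\end{equation}
On the other hand, because $P$ has rational longitude $\lambda$ of order $2$, the total homology class of $\gamma(P)$ in $H_1(T^2;\Z)$ — summed over components and weighted by local-system rank — is $2[\lambda]$, and by the structure theory of these invariants one may choose a representative of $\gamma(P)$ which, outside a winding region $A$ around $z$, consists of exactly two coherently oriented arcs of slope $\lambda$ together with figure-eight-type components encircling $z$, while inside $A$ it winds around $z$ in a controlled way; this is the multiplicity-two analogue of the picture used in the $n=1$ case.

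The heart of the proof is \eqref{eq:keyineq}, and I expect it to be the main obstacle. The purely homological bound only yields $\iota(\gamma(Z),\gamma(P)) \geq 2\,|[\gamma(Z)]\cdot[\lambda]|$, which is strictly weaker than the right-hand side of \eqref{eq:keyineq} precisely when $\gamma(Z)$ itself winds nontrivially around $z$ (as happens already for simple knot complements), so one genuinely needs the winding-region analysis of~\cite[Theorem~1.12]{hanselman2016bordered}, adapted from multiplicity one to multiplicity two. Concretely, I would fix the reduced representatives of $\gamma(Z)$ and $\gamma(P)$, put them in minimal position, cut $T$ along a curve dual to $\lambda$, and bound $\iota(\gamma(Z),\gamma(P))$ below by separately counting the crossings of $\gamma(Z)$ with the two $\lambda$-parallel strands of $\gamma(P)$ outside $A$ and with its spiral inside $A$; the coherent orientation of those strands is what rules out cancellation and produces the factor $2$, matching exactly the contribution of $\gamma(E_2)=(\ell_\lambda,\zeta)$, while the figure-eight components of $\gamma(P)$ contribute nonnegatively. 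A convenient way to organize the bookkeeping — and to see that the local system is harmless — is to pass to the connected double cover of $T$ that unwraps $\ell_\lambda$: there $\zeta$ pulls back to a trivial rank-$2$ local system, so $\gamma(E_2)$ becomes two parallel straight lines and \eqref{eq:keyineq} becomes an HRW-type inequality for multiplicity-one curves in the twice-punctured torus. The delicate points will be verifying that ``return'' arcs of $\gamma(Z)$ can be isotoped off $\gamma(P)$ and that the bigon-chasing respects the local systems; once \eqref{eq:keyineq} is established, combining it with the first two paragraphs gives $\rank\widehat{\HF}(M) \leq \rank\widehat{\HF}(N)$.
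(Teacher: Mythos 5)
Your reduction to an intersection-number inequality via the pairing theorem is the same first step as the paper takes, but the input you feed into it is factually wrong, and the error is material. The immersed multicurve $\gamma(E_2)$ is not a single curve $\ell_\lambda$ with an indecomposable rank-$2$ local system: since $H^2(E_2)\cong\Z/2$, it has one component for each of the \emph{two} $\spin^c$ structures on the twisted $I$-bundle over the Klein bottle, and by the Hanselman--Watson computation recalled in Section~\ref{sec:background} these are the loops $(d_0^*)^2$ and $a_1^*b_{-1}^*$ (Figures~\ref{fig:2} and~\ref{fig:3}, giving curves as in Figures~\ref{fig:E2nf0} and~\ref{fig:E2nfn}). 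Relatedly, by~\cite[Corollary 6.6]{hanselman2016bordered} it is \emph{each} $\spin^c$ component of $\gamma(P)$ (and of $\gamma(E_2)$) that carries the class $2[\lambda]$, not the total invariant, so your homological bookkeeping ("total class $2[\lambda]$") is off as well. A sanity check: with your model of $\gamma(E_2)$, the meridional filling of $E_2$ would have $\widehat{\HF}$ of rank $2$, but that filling has $|H_1|=4$ (it is $\R P^3\#\R P^3$), so its rank is at least $4$. Consequently your key inequality, even if proved, bounds only the contribution of one $\spin^c$ component of $E_2$ and does not bound $\rank\widehat{\HF}(M)$ in your notation; the missing component, the one coming from $a_1^*b_{-1}^*$, is exactly the case requiring a new argument.

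The second gap is that the heart of the proof is not actually supplied, and the sketch you give lacks the structural mechanism that makes the argument work. The paper proceeds $\spin^c$ structure by $\spin^c$ structure: the component of $E_2$ with $f_{E_2}=0$ is handled by quoting the proof of~\cite[Theorem 7.15]{hanselman2016bordered} (Theorem~\ref{thm:restatement}), while the new case is treated in Lemma~\ref{lem:generallemma}, where the pegboard/unwinding algorithm is run \emph{equivariantly} with respect to the conjugation symmetry of a self-conjugate component $\gamma_{P,\s}$ with $f_P(\s)=1$ (whose existence is the point of Lemmas~\ref{lem:spincevaluation} and~\ref{lem:selfconjugate}): pushes over the top peg are paired with pushes over the bottom peg so that the intersection number with an arbitrary test curve never increases, and the parity condition $f=k$ (an odd number of pegs between the extremal intersections) is what forces the algorithm to terminate at the non-straight component of $\gamma(E_{2k})$ rather than at a loose curve (cf. Remark~\ref{rem}). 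Your plan instead posits a normal form for $\gamma(P)$ ("two coherently oriented $\lambda$-strands plus figure-eights outside a winding region"), which is essentially the statement that needs proof, offers no substitute for the conjugation symmetry when arcs must be pushed over pegs, and you yourself flag the inequality as the main unresolved obstacle. As written, the proposal therefore does not constitute a proof of the theorem.
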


We prove this by refining Hanselman-Rasmusen-Watson's proof of the $n=1$ case. Specifically, we study the immersed curve invariants of rational homology solid tori with rational longitudes of order $2$, and show that the immersed curve invariant of the twisted $I$-bundle over the Klein bottle is the simplest possible invariant of such a manifold, in an appropriate sense. Here, the immersed curve invariant is a reinterpretation of bordered Floer homology for manifolds with torus boundary~\cite{bordered} due to Hanselman-Rasmussen-Watson~\cite{hanselman2016bordered}.

We conjecture that Theorem~\ref{Thm:main} holds more generally:
\begin{conjecture}\label{ref:conjecture}
    If $M$ is obtained from a rational homology sphere $N$ by an $n$-pinch, then ${\rank(\widehat{\HF}(M))\leq \rank(\widehat{\HF}(N))}$.
\end{conjecture}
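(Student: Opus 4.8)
The plan is to extend the immersed-curve argument used for $n=1$ by Hanselman--Rasmussen--Watson and for $n=2$ in Theorem~\ref{Thm:main} to all $n$. Write $N = N_0 \cup_h P$, where $P$ is the rational homology solid torus with rational longitude $\lambda$ of order $n$ that is replaced, so that $M = N_0 \cup_h E_n$ and $N_0$ is also a rational homology solid torus. By the pairing theorem,
\[ \rank\widehat{\HF}(N) = \iota\bigl(\gamma(N_0),\gamma(P)\bigr), \qquad \rank\widehat{\HF}(M) = \iota\bigl(\gamma(N_0),\gamma(E_n)\bigr), \]
where $\iota$ denotes the minimal geometric intersection number of immersed multicurves in $T^2\setminus\{z\}$ (with the usual gluing and reflection conventions built in). Hence it suffices to prove the purely curve-theoretic assertion that $\gamma(E_n)$ is the \emph{simplest} immersed-curve invariant among rational homology solid tori with rational longitude of order $n$: for every immersed multicurve $\gamma_0$ and every such $P$, one has $\iota(\gamma_0,\gamma(E_n)) \le \iota(\gamma_0,\gamma(P))$.

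First I would record the structure of $\gamma(E_n)$ from the computations of Boyer--Clay and Hanselman--Watson: $E_n$ is Seifert fibered over $D^2(n,n)$ with $H_1(E_n)\cong\Z\oplus\Z/n$, forcing $\gamma(E_n)$ to have total homology class $n[\lambda]$ in $H_1(T^2)$ and to be supported in a neighborhood of the geodesic $\ell_\lambda$ with minimal winding around $z$. For $n\le 2$ every Dehn filling of $E_n$ is a lens space, prism manifold, or $S^1\times S^2$, so $\gamma(E_n)$ is embedded ($n$ disjoint parallel copies of $\ell_\lambda$), and domination reduces, via the straightening lemma for geodesics, to exhibiting in $\gamma(P)$ a sub-multicurve that straightens onto $n$ parallel longitudes; the $n=2$ case dispatches the one remaining possibility, a single connected component of class $2[\lambda]$ winding once around $z$, through the order-$2$ domination lemma. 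The target for general $n$ is an \emph{order-$n$ domination lemma}: the immersed-curve invariant of any rational homology solid torus with rational longitude of order $n$ contains, after homotopies that do not increase $\iota(\gamma_0,-)$ for any $\gamma_0$ (straightening essential components, finger moves that unwind $z$), a sub-multicurve isotopic to $\gamma(E_n)$. I expect this to be carried out by treating the $n$ torsion $\mathrm{spin}^c$-classes of $P$ separately --- passing to the cyclic cover of $T^2\setminus\{z\}$ that separates them --- and using the half-lives--half-dies principle together with the shape of $\widehat{\CFD}(P)$ to force a straightenable longitude in each class; reassembling and projecting back down then multiplies all intersection numbers by $n$ and yields the inequality.

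The main obstacle is that for $n\ge 3$ the curve $\gamma(E_n)$ is no longer embedded: $E_n$ then has Seifert-fibered fillings that are not L-spaces, so $\gamma(E_n)$ acquires self-crossings and a nontrivial winding pattern around $z$. Consequently ``dominating $\gamma(E_n)$'' is strictly stronger than ``containing $n$ parallel longitudes,'' and an adversarial $\gamma_0$ threading the loops of a wound-up component of $\gamma(P)$ could a priori undercut $n$ straight longitudes. Ruling this out requires a classification, or at least strong constraints, on the winding patterns realizable by a genuine three-manifold $P$ with $H_1(P)\cong\Z\oplus\Z/n$ and $\lambda$ of order exactly $n$ --- in particular how its $n$ torsion strata can interleave in the cover --- together with a uniform estimate identifying $\gamma(E_n)$'s pattern as the $\iota$-minimal one. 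Isolating the right monotone combinatorial invariant of winding patterns, non-increasing under straightening and unwinding moves and minimized exactly by $\gamma(E_n)$, is the crux; once it is available, the remaining estimates should parallel the cases $n\le 2$, carried out one $\mathrm{spin}^c$-class at a time. The degree-one map $E_n\to E_1$ furnished by Theorem~\ref{thm:pinch} offers no shortcut: it only relates $\rank\widehat{\HF}(M)$ to $\rank\widehat{\HF}(N_0\cup E_1)$, which lies on the wrong side of the desired inequality.
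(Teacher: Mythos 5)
First, note that the statement you are trying to prove is stated in the paper as a \emph{conjecture}: the authors prove only the $n=2$ case (Theorem~\ref{Thm:main}) and a $\spin^c$-refined partial result for $2k$-pinches (Theorem~\ref{thm:precise}), and Remark~\ref{rem} explains why their method does not extend. Your proposal is likewise not a proof but a program, and the missing step is exactly the open content of the conjecture. The reduction you make --- via the pairing theorem, it suffices to show that $\gamma(E_n)$ is ``$\iota$-minimal'' among immersed-curve invariants of rational homology solid tori whose rational longitude has order $n$ --- is the same reduction the paper makes (Lemma~\ref{lem:generallemma} is precisely this domination statement, proved only for the self-conjugate components in the order-$2k$ setting). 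Your ``order-$n$ domination lemma'' is then asserted, with the key monotone quantity controlling winding patterns left as ``the crux'' to be found. That is a genuine gap, not a routine verification: no argument is given that the proposed straightening/unwinding moves can be organized so as never to increase $\iota(\gamma_0,-)$ while terminating at $\gamma(E_n)$, and the appeal to a cyclic cover separating the torsion $\spin^c$-classes is not developed to the point where one could check it.

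Second, your diagnosis of where the difficulty lies is not quite the right one. The paper's proof for $n\le 2$ does not rest on $\gamma(E_n)$ being embedded; it rests on the \emph{conjugation symmetry} of the components of $\gamma(M)$ in self-conjugate $\spin^c$ structures with $f_M(\s)=k$: the pegboard algorithm of Lemma~\ref{lem:generallemma} only keeps intersection numbers from increasing because move~(4) is always applied simultaneously at a peg and at its image under the symmetry, with the two corner types cancelling. For a general $n$-pinch one must also handle non-self-conjugate $\spin^c$ structures on $M_2$ (pairing $\gamma_\s$ with $\gamma_{-\s}$), where this symmetry is unavailable componentwise; as Remark~\ref{rem} points out, the algorithm can then terminate in loose curves, whereas the corresponding components of $\gamma(E_n)$ are non-loose, so ``containing straightened longitudes'' does not give the needed domination. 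Treating the torsion classes ``one $\spin^c$-class at a time,'' as you propose, runs directly into this obstruction rather than around it. So the proposal, as written, does not establish the conjecture; to make progress you would need to either produce the monotone invariant you allude to and prove its minimization at $\gamma(E_n)$, or find a replacement for the conjugation symmetry in the non-self-conjugate classes.
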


Indeed, in Theorem~\ref{thm:precise}, we show that this more general theorem holds when we restrict the Heegaard Floer homology of $M$ and $N$ to certain $\spin^c$ structures. We defer the statement of this result to Section~\ref{sec:main}, as it is somewhat technical. See Remark~\ref{rem} for some further comments on Conjecture~\ref{ref:conjecture}.

This note is organized as follows. In Section~\ref{sec:background} we review relevant properties of Hanselman-Rasmussen-Watson's immersed curve interpretation of bordered Floer homology and some other background material. In Section~\ref{sec:main} we prove Theorem~\ref{Thm:main} and some other stronger results. 

\subsection*{Acknowledgments}
 The authors thank Tye Lidman for posing the question that led to this work, as well as Steven Sivek and Raphael Zentner. The first author thanks Liam Watson and Jonathan Hanselman for helpful conversations. The second author thanks Jake Rasmussen for helpful correspondences.

\section{Background}\label{sec:background}
We begin with some generalities. A \emph{rational homology solid torus} is a $3$-manifold with torus boundary, $M$, such that $H_*(M;\Q)\cong H_*(S^1;\Q)$. If $M$ is a rational homology solid torus, then $H_1(M)$ is a rank one $\Z$-module. Here, and throughout this paper unless explicitly stated otherwise, we take the singular (co)-homology groups to have coefficients in $\Z$. Examples of rational homology solid tori include the exteriors of knots in $S^3$ as well as the twisted $I$-bundle over the Klein bottle, as well as the manifolds $E_n$ discussed in the introduction more generally. Consider the map induced by the inclusion $i:\partial M\inj M$. The half-lives half-dies principle implies that the kernel of $i_*:H_1(\partial M;\Z)\to H_1(M;\Z)$ has rank one. $\ker(i_*)\subset H_1(M,\partial M;\Z)$ is generated by one of two element $h_M^\pm$ each of which we call  \emph{a homological longitude of $M$}. These can alternately be viewed as a single closed curve in $\partial M$ with a choice of one of two orientations. $h_M^\pm$ is a positive multiple of some primitive class $\lambda_M^\pm$ called~\emph{a rational longitude of $M$}. $\lambda_M^\pm$ can be viewed as a \emph{simple} closed curve in $\partial M$ equipped with one of two orientations.

A \emph{rational homology sphere} is a $3$-manifold $Y$ with $H_*(Y;\Q)\cong H_*(S^3;\Q)$. One way of producing rational homology spheres is to glue a pair of rational homology solid tori $M,N$ via a diffeomorphism $f:\partial M\to\partial N$ such that $f(\lambda_M^\pm)$ is \emph{not} isotopic to $\lambda_N^\pm$. This can be verified by an application of the Mayer-Veitoris exact sequence. As a partial converse, the exterior of any rationally null-homologous knot in a rational homology sphere is a rational homology solid torus.

\subsection{$\spin^c$-structures} By work of Turaev~\cite{turaev1990euler}, a \emph{$\spin^c$-structure} on a $3$-manifold $M$ can be viewed as an equivalence class of non-vanishing vector fields. Here, two non-vanishing vector fields are equivalent if they are isotopic in the complement of an embedded $3$-ball in $M$. There is an affine correspondence between $\spin^c(M)$ and $H^2(M)$. There is also an operation on $\spin^c$ structures structures called \emph{conjugation} that corresponds to sending a vector field $v$ representing $\s$ to the vector field $-v$. We let $-\s$ denote the conjugate of a $\spin^c$ structure $\s$.

Pick an essential simple closed curve $c\subseteq \partial M$. Such a choice gives rise to a sutured manifold structure on $(M,\gamma)$. See~\cite{gabai1983foliations} for details. A \emph{relative $\spin^c$-structure on $M$} is an equivalence class of non-vanishing vector fields that agree with a prescribed vector field $v$ on the boundary; see~\cite[Section 4]{juhasz2006holomorphic}. Here, two such vector fields are equivalent if they are isotopic in the complement of a properly embedded $3$-ball in $M$. A relative $\spin^c$-structure $\s$ has a relative Chern class, $c_1(\s,t)$, where $t$ is a trivialization of $v^\perp|_{\partial M}$ that, in our context, is determined by $c$; see~\cite[Section 4]{juhasz2006holomorphic} again for details.

There is a canonical map $q:\spin^c(M,\gamma)\to \spin^c(M)$.  It is not too hard to see that this map is surjective. We set $\spin^c(M,\gamma,\s):=\{\overline{\s}\in\spin^c(M,\gamma):q(\overline{\s})=\s\}$. There is an affine correspondence between relative $\spin^c$ structures and $H^2(M,\partial M)$. There is a map $H_2(M,\partial M)\to H_2(M)$ from the long exact sequence of the pair $(M,\partial M)$. It is not too hard to see that this map is also surjective. Indeed, the following diagram commutes:

 \begin{equation*}
\begin{tikzcd}
\spin^c(M, \gamma) \arrow[d,  two heads, "q"]\arrow[r,"\cong"]&H^2(M,\partial M)\arrow[d, two heads,"q^*"]\\ \spin^c(M)\arrow[r,"\cong"] &H^2(M)
\end{tikzcd}
\end{equation*}

\noindent where the horizontal maps are the affine correspondences discussed above.

\subsection{The immersed curve invariant}
Let $T_z$ denote the torus $\R/\Z\times\R/\Z$ with a marked point $z$ at $(0,0)$. There is an invariant of rational homology solid tori due to Hanselman-Rasmussen-Watson that takes value in the set of regular homotopy classes of immersed curves in the complement of $z$ equipped with local systems~\cite{hanselman2016bordered}. Since local systems are of little significance to this paper we do not discuss them here. Let $\gamma_M$ denote the immersed curve invariant of $M$.

The immersed curve invariant has a number of properties that will prove useful to us. $\gamma_M$ has a component $\gamma_{M,\s}$ for each $\s\in\spin^c(M)$. Suppose $M$ is a rational homology solid torus with rational longitudes of order $k$. Up to homotopy in $\R/\Z\times\R/\Z$ --- where we drop the requirement that the homotopy occurs in the complement of the basepoint --- $\gamma_{M,\s}$ is given by the curve $\R/\Z\to \R/\Z\times\R/\Z$ defined by $[x]\mapsto ([kx],0)$~\cite[Corollary 6.6]{hanselman2016bordered}.

\begin{example}
We describe the immersed curve of the maniffolds $E_{2k}$ discussed in the introduction in the two self-conjugate $\spin^c$ structures. This will be relevant for the proof of Lemma~\ref{lem:generallemma}. Hanselman-Watson computed (a version  of) the bordered Floer homology of $E_{k}$, $\widehat{\CFD}(E_k,\phi,\lambda)$, in~\cite[Section 7]{HanselmanWatson}. Here $\phi$ and $\lambda$ parameterize $\partial E_{k}$. The two relevant components are $(d_0^*)^t$ and $(a_k^*b^*_{-k})$\footnote{There is a typo in the statement of this result in~\cite[Section 7]{HanselmanWatson}.}, where here we follow loop calculus notation as described in~\cite[Section 3.2]{HanselmanWatson}. In terms of bordered Floer homology notation, these components are the type $D$ structures encoded by the graphs shown in Figure~\ref{fig:2} and Figure~\ref{fig:3}.

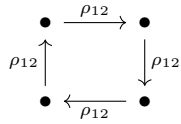
\begin{figure}[b]\centering
\begin{tikzcd}
    \bullet \ar[r,"\rho_{12}"]& \bullet \ar[d,"\rho_{12}"]\\\bullet \ar[u,"\rho_{12}"]&\bullet\ar[l,"\rho_{12}"]
\end{tikzcd}
   \caption{The component for $(d_0^*)^{4}$. More generally, the component for $E_n$ is a loop of $n$ arrows each decorated with $\rho_{12}$.}\label{fig:2}
      \end{figure}

     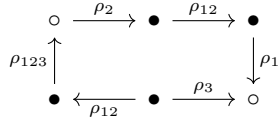
\begin{figure}[b]\centering
\begin{tikzcd}
   \circ \ar[r,"\rho_2"]& \bullet \ar[r,"\rho_{12}"]&\bullet \ar[d,"\rho_1"] \\\bullet \ar[u,"\rho_{123}"]&\bullet\ar[r,"\rho_3"]\ar[l,"\rho_{12}"]&\circ
\end{tikzcd}

   \caption{The component for $(a_2^*)(b_{-2}^*)$ in $E_4$. The corresponding component for $E_{2n}$ can be obtained by adding an extra $n-2$ $\rho_{12}$ arrows in the middle of the upper and lower rows of the diagram.}\label{fig:3}
   \end{figure}

The algorithm for reinterpreting $\widehat{\CFD}(E_{2k},\phi,\lambda)$ in terms of immersed curves is described in~\cite[Section 2.4]{hanselman2016bordered}. Applying this algorithm, we find that the two components of the immersed curve invariant are given as shown in Figure~\ref{fig:E2nf0} and Figure~\ref{fig:E2nfn}.

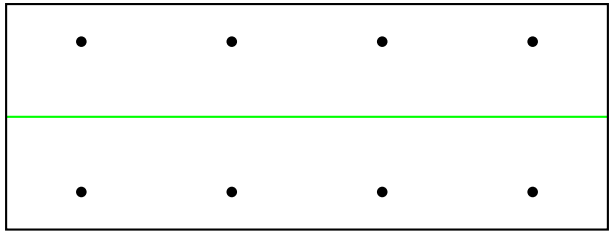
\begin{figure}[b]\centering

    \begin{tikzpicture}

     \fill[black] (-1, 1) circle (2pt);
     \fill[black] (1, 1) circle (2pt);
      \fill[black] (-3, 1) circle (2pt);
          \fill[black] (3, 1) circle (2pt);
               \fill[black] (-1, -1) circle (2pt);
     \fill[black] (1, -1) circle (2pt);
      \fill[black] (-3, -1) circle (2pt);
          \fill[black] (3, -1) circle (2pt);

   \draw[green, thick]  (-4,0) -- (4,0);
             
        \draw[thick]  (-4,-1.5) rectangle (4,1.5);
   
    \end{tikzpicture}
   \caption{A component of the immersed curve invairant of $E_4$ corresponding to a self-conjugate $\spin^c$-structure.}\label{fig:E2nf0}
  \end{figure}

  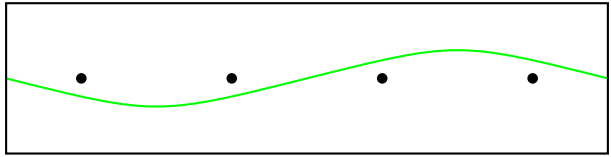
\begin{figure}[b]\centering

    \begin{tikzpicture}

     \fill[black] (-1, 0) circle (2pt);
     \fill[black] (1, 0) circle (2pt);
      \fill[black] (-3, 0) circle (2pt);
          \fill[black] (3, 0) circle (2pt);

   \draw[green, thick]  (-4,0) .. controls (-2, -0.5) .. (0, 0);
   \draw[green, thick]  (0,0) .. controls (2, 0.5) .. (4,0);
             
        \draw[thick]  (-4,-1) rectangle (4,1);
   
    \end{tikzpicture}
   \caption{A component of the immersed curve of $E_4$ corresponding to a self-conjugate $\spin^c$-structure. To obtain the corresponding component of the immersed curve for $E_{2n}$, replace the two basepoints above (below) the horizontal row of black dots with $n$ evenly placed black dots.}\label{fig:E2nfn}

\end{figure}
\end{example}

  If $M$ and $N$ are rational homology solid tori with immersed curve invariants $\gamma_M$ and $\gamma_N$ respectively and $M\cup_\psi N$ is a rational homology sphere obtained by gluing $M$ and $N$ by some diffeomorphism $\psi:\partial M\to \partial N$, then~\cite[Theorem 1.12]{hanselman2016bordered} tells us that \begin{align}\label{eq:Hfcompute}
      \rank(\widehat{\HF}(M\cup_\psi N))=\rank(\HF(\gamma_M,\psi^*(\gamma_N)))\geq i(\gamma_M,\psi^*(\gamma_N)).
  \end{align}

  Here $\HF(-,-)$ is a version of Lagrangian Floer homology, $i(\alpha,\beta)$ is the geometric intersection number of $\alpha$ and $\beta$, and $\psi^*$ is a diffeomorphism of the solid torus determined by $\psi$. Here, and throughout this paper, we take $\widehat{\HF}(-)$ to have coefficients in $\Z/2$.

Following~\cite[Section 7.1]{hanselman2016bordered}, we find it helpful to equip $T_z$, with one of a family of complete Riemannian metrics parameterized by $\epsilon$. This Riemannian metric is flat outside of an $\epsilon$ neighborhood of $z$, $\nu_\epsilon(z)$. We refer the reader to~\cite[Section 7.1]{hanselman2016bordered} for details. We denote $ T_z$ equipped with the Riemannian metric corresponding to $\epsilon$ by $(T_z,\epsilon)$, though we will usually suppress the $\epsilon$ dependence in our notation. In the proof of Theorem~\ref{Thm:main}, we will often appeal to the following technical result.

\begin{lemma}[{\cite[Lemma 7.1]{hanselman2016bordered}}]
Every non-trivial homotopy class of curve in $T_z$ can be isotoped to a geodesic in $(T_z,\epsilon)$. This geodesic representative is unique unless it is either a straight line in the complement of a $2\epsilon$-neighborhood of $z$, or parallel to $\partial(\nu_{\epsilon}(z))$. Moreover, distinct pairs of geodesics intersect transversely and minimally.\end{lemma}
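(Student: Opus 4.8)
The plan is to recast the statement as one about closed geodesics on a complete surface of non-positive curvature, so that it follows from standard Hadamard-manifold techniques once the local geometry of $(T_z,\epsilon)$ near $z$ has been recorded. From \cite[Section 7.1]{hanselman2016bordered} one reads off the relevant features of the metric: it is complete, has Gaussian curvature $K\le 0$ everywhere, coincides with the flat metric on $T^2\setminus\nu_\epsilon(z)$, is strictly negatively curved on part of $\nu_\epsilon(z)$, carries a flat product cylinder collaring the end at $z$, and has a \emph{convex} end at $z$, in the sense that every closed curve is freely homotopic, through a length-non-increasing homotopy, into a fixed compact core $C$. Passing to the universal cover $\widetilde{T_z}$ equipped with the pulled-back metric gives a complete, simply connected surface of non-positive curvature---a Hadamard surface---so that geodesics between two of its points are unique and two distinct complete geodesics meet in at most one point.

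For existence, fix a non-trivial free homotopy class $c$ and minimize length over $c$. Using convexity of the end, a length-minimizing sequence may be taken inside the compact core $C$; by the Arzel\`a--Ascoli theorem it subconverges, and the standard argument shows the limit is a closed geodesic $\gamma_c\subset T_z$ representing $c$, which necessarily avoids $z$ since $d(\cdot,z)=\infty$. If $c$ is primitive then $\gamma_c$ is primitive; an arbitrary class is $d\cdot c_0$ with $c_0$ primitive and its geodesic representative is the $d$-fold cover of $\gamma_{c_0}$. When $c$ is represented by an essential simple closed curve, $\gamma_c$ realizes the minimal self-intersection number in $c$ (Hass--Scott), hence is embedded, and an embedded curve freely homotopic to an embedded geodesic is ambient isotopic to it (Baer); this yields the claimed \emph{isotopy}, and for the non-simple immersed curves occurring as components of the invariant one argues similarly via minimal self-intersection and a regular homotopy.

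For the (non-)uniqueness assertion I would invoke the flat strip theorem: on a complete non-positively curved surface, two distinct freely homotopic closed geodesics cobound an immersed totally geodesic flat cylinder. In $(T_z,\epsilon)$ the only totally geodesic flat regions are sub-cylinders of the flat part $T^2\setminus\nu_\epsilon(z)$ and the flat collar about $z$, so non-uniqueness happens precisely for a class whose geodesic representative is a straight line contained in the flat region---the $2\epsilon$ buffer guaranteeing that the whole parallel family of translates remains geodesic---or the core of the flat collar, i.e.\ a curve parallel to $\partial\nu_\epsilon(z)$. Finally, for the intersection statement: if two closed geodesics with distinct images were tangent at a point, lifting to $\widetilde{T_z}$ produces two complete geodesics through a common point with a common direction, which must coincide, forcing the two closed geodesics to coincide; hence distinct geodesics cross transversely. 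And if two such geodesics met non-minimally, lifting a resulting bigon would give two distinct geodesics of $\widetilde{T_z}$ through two common points, contradicting uniqueness of geodesics between points in a Hadamard surface; hence there are no bigons and the geometric intersection number is realized.

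The step I expect to be the main obstacle is the existence argument at the end $z$: making the convexity of the funnel precise---so that minimizing sequences do not unravel toward $z$ and the limiting geodesic is genuinely an embedded closed curve in $T_z$ avoiding $z$---since this is exactly where the metric departs from the flat model. A secondary technical point is the upgrade from ``freely homotopic'' to ``isotopic'' for the possibly non-simple immersed curves that appear as components of the invariant.
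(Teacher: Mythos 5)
The paper does not actually prove this statement: it is quoted verbatim as \cite[Lemma 7.1]{hanselman2016bordered}, with the metric $(T_z,\epsilon)$ also imported from Section 7.1 of that paper, so there is no in-paper proof to compare against. Your proposal is, in spirit, the standard argument underlying the cited lemma: non-positive curvature, length minimization in a free homotopy class, the flat strip theorem for the uniqueness dichotomy, and lifting to the Hadamard universal cover to rule out tangencies and bigons. That outline is sound, and it is essentially how one justifies the ``pegboard'' picture that Hanselman--Rasmussen--Watson use.

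Two caveats keep this from being a complete proof as written. First, the properties of the metric you list (curvature $K\le 0$ everywhere, a flat product-cylinder end at $z$, convexity of the end) are reverse-engineered from the statement rather than verified; they do hold for the metric constructed in~\cite[Section 7.1]{hanselman2016bordered}, but they are exactly what makes the existence step work (with a hyperbolic cusp at $z$, classes encircling $z$ would have no geodesic representative and the ``parallel to $\partial\nu_\epsilon(z)$'' clause would be vacuous), so they must be quoted from the source, not assumed. Second, your flat-strip argument only gives the easy direction of the uniqueness dichotomy: a straight line avoiding $\nu_{2\epsilon}(z)$ has parallel geodesic translates, hence non-uniqueness. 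The forward direction --- that non-uniqueness forces the representative to be a straight line missing the \emph{$2\epsilon$}-neighborhood (rather than merely lying in the flat part, i.e.\ missing $\nu_\epsilon(z)$), or a curve parallel to $\partial\nu_\epsilon(z)$ --- is not pinned down by the argument as stated; the flat strip produced by two distinct freely homotopic geodesics could a priori be a thin strip hugging $\partial\nu_\epsilon(z)$, and one has to use the specific geometry of the transition annulus to recover the precise constant in the statement. A minor further point: for the non-embedded components that actually occur, ``isotoped'' should be read as regular homotopy, as you note, and the Hass--Scott/Baer step should be phrased accordingly.
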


As is often the case when working with immersed curves, we find it helpful to lift them to a specific cover of $T_z$, namely $\overline{T_z}:= \R/\Z\times\R$. The lift depends on the $\spin^c$ structure; i.e. we cannot simultaneously lift all components of the immersed curve to $\overline{T_z}$ in a satisfactory way.  Under this interpretation, relative $\spin^c$-structures are in correspondence with line segments $\{0\}\times(l+\frac{1}{2},l-\frac{1}{2})$, where $l\in\frac{1}{2}\Z$. The advantage of this approach for us is that the value of the \emph{Alexander grading} of $\overline{\s}$, $A:=\dfrac{\langle c_1(\overline{\s},t),[\Sigma]\rangle}{2}$, is exactly $kl$, where $k$ is the order of the rational longitude of the rational longitude of~\cite[Proposition 56]{hanselman2018heegaard}. Note that for a self-conjugate $\spin^c$-structure, the lift is determined by the property that the minimum height intersection point should be minus the maximum height intersection point.

Equation~\ref{eq:Hfcompute} can be reinterpreted as a result for curves in $\overline{T_z}$. Specifically, for each $\s\in\spin^c(M)$ we may lift $\gamma_{\s}$ to $\overline{T_z}$ and then compute an appropriate version of Lagrangian intersection Floer homology with the union of all the lifts of $\gamma_N$ to $\overline{T_z}$.

\section{Proof of the main Theorem and its generalizations}\label{sec:main}

In this section we prove Theorem~\ref{Thm:main}. Throughout this section we let $M$ denote a rational homology solid torus. We let $\lambda_M$ denote a fixed rational longitude of $M$, and $\gamma_{\s}$ denote the component of the immersed curve invariant of $M$ corresponding to the $\spin^c$-structure $\s$. We also fix a choice of dual curve to $\lambda_M$ in $\partial M$, with the property that $\lambda_M\cdot\mu_M=1$.  We begin with a lemma in singular homology.

\begin{lemma}\label{lem:lem31}
    Suppose $M$ is a rational homology solid torus with rational longitudes of order  $k$. Fix an identification $H_1(M)\cong \Z\oplus T$ where $T$ is torsion. Then the class $(1,0)$ is $k$-torsion in $H_1(M,\partial M)$.
\end{lemma}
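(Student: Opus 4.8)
The plan is to move everything into the long exact sequence of the pair $(M,\partial M)$ in singular homology and then bring in Poincar\'e--Lefschetz duality. Write $i\colon\partial M\hookrightarrow M$ for the inclusion. In
\[
H_1(\partial M)\xrightarrow{i_*}H_1(M)\xrightarrow{j_*}H_1(M,\partial M)\xrightarrow{\partial}H_0(\partial M)\xrightarrow{\cong}H_0(M)
\]
the last map is an isomorphism because $M$ and $\partial M$ are connected, so $\partial=0$, $j_*$ is surjective, and $H_1(M,\partial M)\cong H_1(M)/\operatorname{im}(i_*)$. Thus it suffices to prove that $k\cdot(1,0)\in\operatorname{im}(i_*)$.

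Next I would pin down $\operatorname{im}(i_*)$ explicitly. Since $\lambda_M$ is primitive, $\{\lambda_M,\mu_M\}$ is a $\Z$-basis of $H_1(\partial M)\cong\Z^2$, so $\operatorname{im}(i_*)=\Z\,i_*(\lambda_M)+\Z\,i_*(\mu_M)$; moreover $\ker(i_*)$ is generated by the homological longitude $k\lambda_M$, so $\operatorname{im}(i_*)\cong\Z^2/\langle k\lambda_M\rangle$, whose torsion subgroup is $\langle i_*(\lambda_M)\rangle\cong\Z/k$. Under the fixed identification this says $i_*(\lambda_M)=(0,t)$ with $t\in T$ of order $k$, while $i_*(\mu_M)$ has infinite order, say with free part $b\neq0$. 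Then I would invoke duality: for a rational homology solid torus $H_2(M)\cong H^1(M,\partial M)$ is free, and $H_2(M;\Q)=0$ by definition, so $H_2(M)=0$; hence by universal coefficients $H_1(M,\partial M)\cong H^2(M)\cong\operatorname{Ext}(H_1(M),\Z)\cong T$. Consequently the index $[H_1(M):\operatorname{im}(i_*)]$ equals $\lvert T\rvert$; but this index also equals $\lvert T\rvert\,\lvert b\rvert/k$, as one sees from the elementary divisors of $\operatorname{im}(i_*)\hookrightarrow H_1(M)$, so $\lvert b\rvert=k$. After replacing $\mu_M$ by $-\mu_M$ if necessary we may write $i_*(\mu_M)=(k,s)$ with $s\in T$.

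It remains to prove $k\cdot(1,0)\in\operatorname{im}(i_*)$; since $i_*(\mu_M)=(k,s)$ this is exactly the statement that the torsion term $s$ can be removed, and I expect this to be the main obstacle, as everything up to here is purely formal. It suffices to show that $i_*(\mu_M)$ is divisible by $k$ in the quotient $H_1(M)/\langle i_*(\lambda_M)\rangle$: granting that, replace $\mu_M$ by a suitable $\mu_M+m\lambda_M$ and adjust the identification $H_1(M)\cong\Z\oplus T$ so that $i_*(\mu_M)=(k,0)$, and then $k\cdot(1,0)=i_*(\mu_M)\in\operatorname{im}(i_*)$, hence $j_*\!\big(k\cdot(1,0)\big)=0$, i.e.\ $(1,0)$ is $k$-torsion in $H_1(M,\partial M)$. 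To establish this divisibility I would use the rational Seifert surface: there is a properly embedded $S\subset M$ generating $H_2(M,\partial M)\cong\Z$ with $\partial[S]=k\lambda_M$, and Dehn filling $M$ along $\lambda_M$ caps $S$ off by $k$ meridian disks into a closed surface carrying the homology generator, while $\mu_M$ becomes the core of the glued-in solid torus; reading off intersection pairings in this closed manifold, together with the rigidity $H_1(M,\partial M)\cong T$ obtained above, should force the required divisibility.
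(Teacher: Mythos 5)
Your steps 1--3 are correct, and they follow essentially the same route as the paper (the long exact sequence of $(M,\partial M)$ together with Poincar\'e--Lefschetz duality and universal coefficients); in fact your bookkeeping is cleaner: $H_1(M,\partial M)\cong H_1(M)/\operatorname{im}(i_*)$, $H_2(M)=0$, $H_1(M,\partial M)\cong \operatorname{Ext}(H_1(M),\Z)\cong T$, and the index count forcing the free part of $i_*(\mu_M)$ to be $\pm k$ are all right. But the step you defer --- removing the torsion term $s$ from $i_*(\mu_M)=(k,s)$, i.e.\ showing $i_*(\mu_M)$ is divisible by $k$ in $H_1(M)/\langle i_*(\lambda_M)\rangle$ --- is the entire content of the lemma, and your proposal only expresses the hope that capping off the rational Seifert surface ``should force'' it. There is also a structural problem with the reduction itself: after ``adjusting the identification'' you would only have verified the claim for the new splitting, whereas the class $(1,0)$ depends on the splitting fixed in the statement, so at best you would prove that \emph{some} identification works.

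More seriously, no argument of the sketched kind can close the gap, because the divisibility is not a consequence of the hypotheses. Take $K_1\cup K_2$ to be the $(2,16)$-torus link (both components unknotted, linking number $8$), let $Y=S^3_{16}(K_1)$ and $M=Y\setminus\nu(K_2)$. Then $H_1(M)=\Z\langle m_1,m_2\rangle/(16m_1+8m_2)\cong\Z\langle m_1\rangle\oplus(\Z/8)\langle u\rangle$ with $u=2m_1+m_2$; the meridian maps to $m_2=u-2m_1$ and the Seifert-framed longitude to $8m_1$, so the rational longitude is $\lambda_{S^3}+4\mu$, whose image is $4u$ of order $k=2$. However $H_1(M,\partial M)\cong H_1(M)/\langle m_2,8m_1\rangle\cong\Z/8$, generated by $[m_1]$, and since $u\mapsto 2[m_1]$, every generator of a free summand of $H_1(M)$ (i.e.\ every possible $(1,0)$) maps to an odd multiple of $[m_1]$, hence has order $8$, not $2$; equivalently $s\notin\langle t\rangle+kT$, so your divisibility fails for every choice of $\mu_M$ and of identification. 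Thus the ``main obstacle'' you identified is real and cannot be overcome without additional hypotheses: what your steps 1--3 (and this example) do give is only that the free part of $i_*(\mu_M)$ is $\pm k$, equivalently that a free generator pairs to $\pm1$ with the rational Seifert surface class. For comparison, the paper's own proof attempts to finish by an order count for $H_1(M,\partial M)$, but the correct count --- the one you carried out --- only pins down that free part and says nothing about $s$, so you have in fact isolated precisely the point at which the published argument is incomplete.
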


\begin{proof}
    
Fix an isomorphism $H_1(M)\cong \Z\oplus T$. Observe that under the map induced by the inclusion $\partial M\to M$ on homology, $H_1(\partial M)\to H_1(M)$, we have that $\lambda_M\mapsto (0,\alpha)$, $\mu_M\mapsto (n,\beta)$  for some $\beta$, where $\alpha$ is of order $k$, and $n$ is some integer. Note that $H_1(M,\partial M)\cong H_1(M)/H_1(\partial M)$. Observe that $H_1(M,\partial M)$ is torsion, and indeed that $|H_1(M,\partial M)|=\dfrac{|n| \cdot \text{ord}(\beta)|T|}{k}$.

Consider the long exact sequence of the pair $(M,\partial M)$

    \begin{equation}
\begin{tikzcd}H_3(M,\partial M)\ar[r,"\partial"]&H_2(\partial M)\ar[r,"i_*"]& H_2(M)\ar[r, "q_*"]& H_2(M,\partial M)\ar[r, "\partial"]&H_1(\partial M).\end{tikzcd}\end{equation}

Observe that by Poincar\'e-Lefshetz  duality and the universal coefficient theorem $$H_2(M)\cong H^1(M,\partial M)\cong H_1(M,\partial M)\cong \Z\oplus T/((0,\alpha),(n,\beta)).$$ Likewise $$H_2(M,\partial M)\cong H^1(M)\cong H_1(M)\cong \Z\oplus T.$$ Since $H_1(\partial M)\cong\Z\oplus\Z$, it follows that $\partial:H_2(M,\partial M)\to H_1(M)$ kills the torsion and consequently $q_*$ yields a surjection $$\Z\oplus T/((0,\alpha),(n,\beta))\surj T.$$ Indeed, the map $\partial:H_3(M,\partial M)\to H_2(\partial M)$ is given by $\PD^{-1}\circ i^*\circ \PD:$, where $i^*:H^0( M)\to H^{0}(\partial M)$, which is an isomorphism. Consequently $q_*$ is injective and restricts to an isomorphism $\Z\oplus T/((0,\alpha),(n,\beta))\to T$. In particular, $\dfrac{|n|\cdot\text{ord}(\beta)|T|}{k}= |T|$, so that $k=|n|\cdot\text{ord}(\beta)$. It follows that $(1,0)$ generates a $\Z_{k}$ summand of $H_1(M,\partial M)\cong \Z\oplus T/((0,\alpha),(n,\beta))$.
\end{proof}

Now fix a surface $\Sigma\subset M$ with $[\partial\Sigma_M]=k[\lambda_M]\in H_1(\partial M)$. Recall that the choice of $\mu_M$ specifies a sutured structure, $(M,\gamma)$, on $M$, where the sutures are given by small neighborhoods of two parallel oppositely oriented copies of $\mu_M$.

We define a map $f_M:\spin^c(M)\to\Z/k$ by \begin{align*}\s\mapsto \left[\dfrac{\langle c_1(\overline{\s},t),[\Sigma_M]\rangle}{2}\right]_k\end{align*} where $\overline{\s}$ is any element of $\spin^c(M,\gamma,\s)$. Here, $[-]_k$ denotes reduction mod $k$. Apriori, it is not clear why the evaluation $\langle c_1(\overline{\s},t),[\Sigma_M]\rangle$ should be even, but this can be shown by observing that if $\s$ is a relative $\spin^c$-structure that maximises $\langle c_1(\overline{\s},t),[\Sigma_M]\rangle$ over relative $\spin^c$ structures for which $\SFH((M,\gamma),\s)\neq 0$, then  $$\langle c_1(\overline{\s},t),[\Sigma_M]\rangle =2g+2n-2$$ by~\cite[Theorem 4]{friedl2011decategorification}, where $g$ and $n$ are the genus and number of boundary components of a rational Seifert surface for $M$. The claim then follows from noting that for any other relative $\spin^c$ structure $\s'$, $c_1(\s',t)-c_1(\s,t)=2(\s'-\s)\in 2H^2(Y,\partial Y)$; see~\cite[Section 7]{OSdisksandapps}.

\begin{lemma}\label{lem:spincevaluation}
    Let $M$ be a rational homology solid torus with rational longitudes of order $k$. The map $f_M$ is well defined, surjective and $f_M(-\s)=-f_M(\s)$.

\end{lemma}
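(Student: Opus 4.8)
The plan is to prove the three assertions separately, after translating each into a statement about the homomorphism $\langle\,\cdot\,,[\Sigma_M]\rangle\colon H^2(M,\partial M)\to\Z$, using that $\spin^c(M,\gamma)$ and $\spin^c(M)$ are affine over $H^2(M,\partial M)$ and $H^2(M)$, that $q$ is affine over $q^*$, and that $c_1(\overline{\s}',t)=c_1(\overline{\s},t)+2(\overline{\s}'-\overline{\s})$. For \emph{well-definedness}, note that two lifts $\overline{\s},\overline{\s}'$ of a fixed $\s\in\spin^c(M)$ differ by an element of $\ker\!\big(q^*\colon H^2(M,\partial M)\to H^2(M)\big)$, which by exactness of the cohomology long exact sequence of $(M,\partial M)$ equals the image of the connecting map $\delta\colon H^1(\partial M)\to H^2(M,\partial M)$. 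Hence it suffices to check $\langle\delta\xi,[\Sigma_M]\rangle\in k\Z$ for all $\xi\in H^1(\partial M)$; by naturality of the evaluation pairing with respect to the connecting homomorphisms this equals $\pm\langle\xi,\partial[\Sigma_M]\rangle=\pm\langle\xi,k[\lambda_M]\rangle=\pm k\langle\xi,[\lambda_M]\rangle\in k\Z$. (That $\langle c_1(\overline{\s},t),[\Sigma_M]\rangle$ is even, so that $A(\overline{\s}):=\langle c_1(\overline{\s},t),[\Sigma_M]\rangle/2$ makes sense, is the computation already recorded above the statement.)

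For \emph{surjectivity}, write $f_M=[\,\cdot\,]_k\circ A$; by the previous paragraph $A$ factors through $q$, and since $q$ is surjective it is enough to show $A$ — equivalently its linear part $\langle\,\cdot\,,[\Sigma_M]\rangle\colon H^2(M,\partial M)\to\Z$ — is surjective. The key point is that $[\Sigma_M]$ \emph{generates} $H_2(M,\partial M)\cong\Z$: since $M$ is a rational homology solid torus, $H_2(M)=0$, so $\partial\colon H_2(M,\partial M)\to H_1(\partial M)$ is injective with image $\ker\!\big(i_*\colon H_1(\partial M)\to H_1(M)\big)=\Z\cdot h_M$, where $h_M=k\lambda_M$ is the homological longitude; as $\partial[\Sigma_M]=k[\lambda_M]=h_M$ generates this image, $[\Sigma_M]$ is a generator. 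The universal coefficient theorem then makes the evaluation map $H^2(M,\partial M)\to\Hom(H_2(M,\partial M),\Z)\cong\Z$ surjective, which is precisely the surjectivity of $\langle\,\cdot\,,[\Sigma_M]\rangle$, hence of $A$, hence of $f_M$.

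For the \emph{conjugation identity}, conjugation on $\spin^c(M)$ is covered by the conjugation involution $J$ on $\spin^c(M,\gamma)$ — well-defined because the sutured structure $(M,\gamma)$, built from two oppositely oriented parallel copies of $\mu_M$, is conjugation-symmetric — with $q(J\overline{\s})=-q(\overline{\s})$; moreover $c_1(J\overline{\s},t)=-c_1(\overline{\s},t)$, the relative analogue of $c_1(-\s)=-c_1(\s)$. Thus $A(J\overline{\s})=-A(\overline{\s})$ and $f_M(-\s)=[A(J\overline{\s})]_k=-[A(\overline{\s})]_k=-f_M(\s)$. (Alternatively, in the lift picture of Section~\ref{sec:background}, conjugation reflects a lift of $\gamma_\s$ through the basepoint, negating its height and hence $A$.) I expect the fiddliest point to be confirming that $c_1(J\overline{\s},t)=-c_1(\overline{\s},t)$ holds on the nose with no correction term, which amounts to compatibility of the boundary trivialization $t$ with conjugation; the homological input in the surjectivity step — that $[\Sigma_M]$ generates $H_2(M,\partial M)$ — is the other thing worth spelling out carefully, though it is immediate from $H_2(M)=0$.
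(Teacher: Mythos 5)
Your treatment of well-definedness and surjectivity is correct. Well-definedness is the paper's argument in a slightly different guise: the paper identifies the ambiguity in the lift with the span of $\PD[\mu_M]$ and $\PD[\lambda_M]$ and computes $\langle \PD[\mu_M],[\Sigma_M]\rangle=k$, while you use $\ker q^*=\im\,\delta$ and the adjunction $\langle\delta\xi,[\Sigma_M]\rangle=\pm\langle\xi,\partial[\Sigma_M]\rangle$; both boil down to $\partial[\Sigma_M]=k[\lambda_M]$. Your surjectivity step is, if anything, cleaner than the printed one: you isolate the key homological fact that $[\Sigma_M]$ generates $H_2(M,\partial M)\cong\Z$ (from $H_2(M)=0$ and $\partial[\Sigma_M]=h_M$), which is precisely what makes the linear part $\langle\,\cdot\,,[\Sigma_M]\rangle$ of $A$ surjective; the paper reaches the same point implicitly through the diagram built on Lemma~\ref{lem:lem31}, by shifting a relative structure along the generator of the free part of $H^2(M,\partial M)\cong H_1(M)$.

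The genuine gap is in the conjugation identity, and you have flagged it yourself: you assert the existence of an involution $J$ on $\spin^c(M,\gamma)$ covering conjugation with $c_1(J\overline{\s},t)=-c_1(\overline{\s},t)$, but that assertion \emph{is} the content of this part of the lemma, and it does not follow formally from the closed-manifold identity $c_1(-\s)=-c_1(\s)$. If $v$ represents $\overline{\s}$, then $-v$ fails the boundary condition imposed by the sutures, so it represents no relative structure at all; one must first modify it in a collar of $\partial M$, and the relative Chern class depends on that modification and on the fixed trivialization $t$. This is exactly the step the paper supplies: it rotates $-v$ by $180$ degrees along $\lambda_M$ in a neighborhood of $\partial M$ (Figure~\ref{fig:rotating}) to obtain a field $v'$ representing some $\s'\in\spin^c(M,\gamma,-\s)$, and checks that with respect to the same $t$ one gets $c_1(\s',t)=-c_1(\overline{\s},t)$. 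Note that well-definedness gives you slack here: since any two lifts of $-\s$ have evaluations on $[\Sigma_M]$ differing by multiples of $2k$, it suffices to produce one explicit boundary fix-up and verify that its contribution to $\langle c_1(\cdot,t),[\Sigma_M]\rangle$ vanishes modulo $2k$ --- but some such explicit verification must be carried out, and as written your third step restates the claim rather than proving it. Your parenthetical alternative via reflecting lifts of $\gamma_\s$ through the basepoint is not an independent repair either: it would require quoting the symmetry of the immersed-curve invariant together with the precise correspondence between heights of lifts and $c_1$ evaluations, which is comparable work and a heavier import than the direct vector-field argument.
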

\begin{proof}

  Consider the following diagram:

    \begin{equation*}
\begin{tikzcd}
\spin^c(M, \gamma) \arrow[d,  two heads, "q"]\arrow[r,"\cong"]&H^2(M,\partial M)\arrow[d, two heads,"q^*"]\arrow[r,"\PD"]&H_1(M)\ar[d, two heads,"q_*"]\ar[r,"\cong"]&\Z\oplus T\ar[d, two heads]
\\ \spin^c(M)\arrow[r,"\cong"] &H^2(M)\arrow[r,"\PD"]&H_1(M,\partial M)\ar[r,"\cong"]&\Z\oplus T/\langle(1,\beta),(0,\alpha)\rangle
\end{tikzcd}
\end{equation*}

  Here we use the identifications and notation introduced in the proof of Lemma~\ref{lem:lem31}. the map ${H^2(M,\partial M) \surj H^2(M)}$ comes from the long exact sequence in cohomology of the pair $(M, \partial M)$, $q_*$ is the map from the long exact sequence in homology. The map $\spin^c(M,\gamma) \surj \spin^c(M)$ is the canonical one. The diagram clearly commutes. Now, if two relative $\spin^c$ structures $\overline{\s}^1$ and $\overline{\s}^2$ have the property that $q(\overline{\s}^1)=q(\overline{\s}^2)$ then $\overline{\s}^1-\overline{\s}^2$ lies in the span of $\PD[\mu_M]$ and $\PD[\lambda_M]$. Thus, if two relative $\spin^c$ structures $\overline{\s}^1$ and $\overline{\s}^2$ are lifts of a $\spin^c$ structure $\s$ then there is a pair of integers $a,b$ such that\begin{align*}\langle c_1(\overline{\s}^1_i,t),[\Sigma_M]\rangle - \langle  c_1(\overline{\s}^2_i,t),[\Sigma_M]\rangle&=\langle2(\overline{\s}^1-\overline{\s}^2),[\Sigma_M]\rangle\\&= \langle 2a\PD([\mu_M])+2b\PD([\lambda_M]),[\Sigma_M]\rangle\\
  &=\langle 2a[\mu_M],[\Sigma_M]\rangle\\&=2ka.\end{align*}

 \noindent It follows that $f_M$ is well defined.
  
We now show that $f_M$ is surjective. Observe that if $f_M(\s)=i$ for some $i$, then the relative $\spin^c$ structure $\s'$ with $\s'-\overline{\s}=(k,0)\in H^2(M,\partial M)\cong H_1(M)\cong\Z\oplus T$ has the property that 
  \begin{align*}\langle c_1(\overline{\s},t)-c_1(\s',t),[\Sigma_M]\rangle&=\langle 2k\PD(1,0),[\Sigma_M]\rangle\\&=2k.\end{align*}

  We now show that $f_M(-\s)=-f_M(\s)$. Suppose $\overline{\s}$ is represented by a vector field $v$. Then $-v$ represents $-\s$ as a $\spin^c$ structure. However, it doesn't represent \emph{any} relative $\spin^c$-structure, since it doesn't satisfy the necessary boundary conditions. Nevertheless, we can isotope $-v$ in a small neighborhood of $\partial M$ to a vector field $v'$ that represents a relative $\s'$ $\spin^c$ structure in $\spin^c(M,\gamma,-\s)$ as shown in Figure~\ref{fig:rotating}. In particular we see that $c_1(\s',t)=-c_1(\s,t)$, so the result follows.\end{proof}
  
  \begin{figure}
\centering
\begin{tikzpicture}
    \draw[thick] (0,0) circle (2);

    \foreach \angle in {0,45,...,315} {
        \draw[thick, ->,red] ({2*cos(\angle)}, {2*sin(\angle)}) -- ++(0,-1);
    }
     \foreach \angle in {0,45,...,315} {
        \draw[thick, ->,blue] ({2*cos(\angle)}, {2*sin(\angle)}) -- ++(0,1);
    }
    \fill[green] (-2,0) circle (3pt);
    \fill[green] (2,0) circle (3pt);
\end{tikzpicture}
\caption{The circle is a copy of $\lambda_M$ in $\partial M$. The points shown in green are the cores of the sutures, which are isotopic to $\mu_M$. The trivialization $t$ points directly out of the page. The isotopy from $-v|_{\partial M}$, shown in red, to $v'|_{\partial M}$, shown in blue, is induced by rotating the circle --- i.e. a copy of $\lambda_M$ --- by 180 degrees.}\label{fig:rotating}
\end{figure}
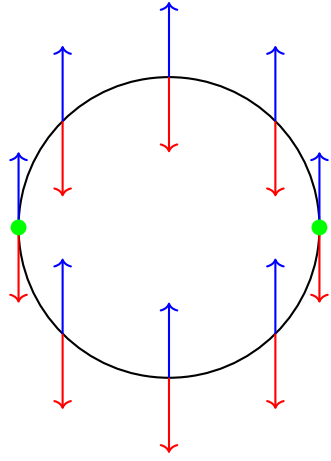

We now find a specific $\spin^c$-structure on $M$ satisfying some additional properties.

\begin{lemma}\label{lem:selfconjugate}
     Let $M$ be a rational homology solid torus with rational longitudes of order
$2k$ in $H_1(M)$ for some $k\in\Z$. Then there is a self-conjugate $\spin^c$-structure $\s$ with $f_M(\s)=k$.
\end{lemma}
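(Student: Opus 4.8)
The plan is to start from \emph{some} self-conjugate $\spin^c$-structure and, if necessary, correct it by a $2$-torsion class so that its $f_M$-value becomes $k$. First I would recall that $M$ admits a spin structure, hence a self-conjugate $\spin^c$-structure; more precisely, the set of self-conjugate $\spin^c$-structures on $M$ is a nonempty torsor over the $2$-torsion subgroup of $H^2(M)\cong H_1(M,\partial M)$. Fix one, say $\s_0$. Since $\s_0=-\s_0$, Lemma~\ref{lem:spincevaluation} gives $f_M(\s_0)=f_M(-\s_0)=-f_M(\s_0)$, so $2f_M(\s_0)=0$ in $\Z/2k$ and hence $f_M(\s_0)\in\{0,k\}$. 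If $f_M(\s_0)=k$ we are done, so the real content is the case $f_M(\s_0)=0$, where I must exhibit a \emph{different} self-conjugate $\spin^c$-structure with $f_M$-value $k$.

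To build it, let $g\in H^2(M)\cong H_1(M,\partial M)$ be the image of the free generator $(1,0)\in H_1(M)\cong\Z\oplus T$; by Lemma~\ref{lem:lem31}, $g$ generates a $\Z_{2k}$-summand of $H_1(M,\partial M)$, so $kg$ is a nonzero $2$-torsion class. I would lift $\s_0$ to $\overline{\s_0}\in\spin^c(M,\gamma)$, put $\overline{\s}:=\overline{\s_0}+k\,\PD^{-1}(1,0)$ with $\PD^{-1}(1,0)\in H^2(M,\partial M)$, and set $\s:=q(\overline{\s})$. Since $q(\overline{\s})-q(\overline{\s_0})=kg$ is $2$-torsion, $\s$ is again self-conjugate. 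On the other hand $c_1(\overline{\s},t)=c_1(\overline{\s_0},t)+2k\,\PD^{-1}(1,0)$, so (using that $f_M$ is well defined, Lemma~\ref{lem:spincevaluation})
\[
f_M(\s)=\bigl[\,f_M(\s_0)+k\,\langle \PD^{-1}(1,0),[\Sigma_M]\rangle\,\bigr]_{2k}.
\]
Hence it suffices to prove $\langle \PD^{-1}(1,0),[\Sigma_M]\rangle=\pm1$, for then $f_M(\s)=[\,0\pm k\,]_{2k}=k$.

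The verification of $\langle \PD^{-1}(1,0),[\Sigma_M]\rangle=\pm1$ is the step I expect to be the main obstacle; note that the mere surjectivity of $f_M$ from Lemma~\ref{lem:spincevaluation} does \emph{not} force the $f_M$-value of a self-conjugate structure to reach $k$, so genuine geometric input about $\Sigma_M$ is required. I would argue that $[\Sigma_M]$ generates $H_2(M,\partial M)$ modulo torsion: because $M$ is a rational homology solid torus, $H_2(M)$ is torsion, so $\partial\colon H_2(M,\partial M)\to H_1(\partial M)$ has torsion kernel; its image is $\ker\!\big(i_*\colon H_1(\partial M)\to H_1(M)\big)$, which (since $i_*[\lambda_M]$ has order $2k$ while $i_*[\mu_M]$ has infinite order) equals the infinite cyclic group $\langle 2k[\lambda_M]\rangle$; and $\partial[\Sigma_M]=[\partial\Sigma_M]=2k[\lambda_M]$ is a generator of it. Thus $[\Sigma_M]$ generates $H_2(M,\partial M)/(\text{torsion})\cong\Z$. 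Since the intersection pairing $H_1(M)\times H_2(M,\partial M)\to\Z$ is unimodular modulo torsion and $(1,0)$ generates $H_1(M)/(\text{torsion})\cong\Z$, this gives $\langle \PD^{-1}(1,0),[\Sigma_M]\rangle=\pm1$, which finishes the argument.
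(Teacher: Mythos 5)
Your proposal is correct and follows essentially the same route as the paper: take any self-conjugate $\spin^c$ structure, use $f_M(-\s)=-f_M(\s)$ to conclude $f_M(\s)\in\{0,k\}$, and in the case $f_M(\s)=0$ shift by the order-two class $k\cdot(1,0)$ to move the value to $k$. The only difference is that you explicitly justify two points the paper leaves implicit --- that the shifted structure is still self-conjugate because the correcting class is $2$-torsion, and that $\langle \PD^{-1}(1,0),[\Sigma_M]\rangle=\pm1$ via unimodularity of the pairing modulo torsion --- both of which are accurate and strengthen the argument.
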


In the proof we continue to use the identifications and notation set up in the proof of Lemma~\ref{lem:lem31}. 
\begin{proof}

Pick a self-conjugate $\spin^c$ structure $\s$ on $M$. Observe that since $f_M(-\s)=-f_M(\s)$ we have that either $f_M(\s)=k$ or $f_M(\s)=0$. If $f(\s)=k$ we are done.

On the other hand, if $f_M(\s)=0$, then we can pick a $\spin^c$ structure $\s'$ with $$\s'-\s=(k,0)\in \Z\oplus T/\langle(0,\alpha),(n,\beta)\rangle\cong H_1(M,\partial M)\cong H^2(M).$$ Observe that $\s'$ is still self-conjugate. If $\overline{\s}\in\spin^c(Y,\gamma,\s)$, $\overline{\s'}\in\spin^c(Y,\gamma,\s')$, we have that $$\overline{\s'}-\overline{\s}=(k,0)+a\lambda_M+b\mu_M$$ for $a,b\in\Z.$ Consequently \begin{align*}
    c_1(\overline{\s'},t)-c_1(\overline{\s},t)&=2(\overline{\s'}-\overline{\s},t)\\&=2(k,0)+2a\lambda_M+2b\mu_M,
\end{align*}

\noindent whence \begin{align*}
    \langle c_1(\overline{\s'},t),[\Sigma_M]\rangle&=\langle c_1(\overline{\s},t),[\Sigma_M]\rangle+\langle 2(k,0)+2a\lambda_M+2b\mu_M,[\Sigma_M]\rangle\\&=\langle c_1(\overline{\s},t),[\Sigma_M]\rangle+ 2k+4kb
\end{align*}

\noindent and 
\begin{align*}
    f_M(\s')&=\left[\dfrac{\langle c_1(\overline{\s'},t),[\Sigma_M]\rangle}{2}\right]_{2k}\\&=\left[\dfrac{c_1(\overline{\s},t),[\Sigma_M]\rangle}{2}+ k+2kb\right]_{2k}\\&=[f_M(\s)+k]_{2k}\\&=[k]_{2k}
\end{align*}

\noindent as desired.\end{proof}

 With the two previous lemmas at hand, we can now proceed to the immersed curve portion of our argument. We first fix some notation; let $\beta$ be the component of the immersed curve invariant of $E_{2k}$ corresponding to the $\spin^c$ structure $\s$ with $f_{E_{2k}}(\s)=k$.

\begin{lemma}\label{lem:generallemma}
Let $M$ be a rational homology sold torus with rational longitudes of order $2k$. Let $\gamma_\s$ be a component of the immersed curve invariant for $M$ corresponding to a self-conjugate $\spin^c$ structure with $f_M(\s)=k$. Let $\alpha$ be any immersed multi-curve in $T_z$. Then $i(\alpha,\beta)\leq i(\alpha,\gamma_\s)$.
\end{lemma}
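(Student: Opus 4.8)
The plan is to pin down the geometry of $\beta$, show it is essentially the ``simplest'' curve compatible with being a component of a rational homology solid torus with longitude of order $2k$ in a self-conjugate $\spin^c$-structure with $f$-value $k$, and then argue that no immersed multicurve $\alpha$ can intersect $\beta$ more times than it intersects $\gamma_\s$. First I would use the example computations in Figures~\ref{fig:E2nf0} and \ref{fig:E2nfn}, together with the lift to $\overline{T_z} = \R/\Z \times \R$, to describe $\beta$ concretely: it lifts to a curve whose intersection points with $\{0\} \times \R$ all sit at a single height (the Alexander grading $k \cdot l$, which by Lemma~\ref{lem:selfconjugate} and \cite[Proposition 56]{hanselman2018heegaard} corresponds to the self-conjugate lift where the min and max heights coincide, i.e.\ height $0$), and which wraps around the $\R/\Z$ direction $2k$ times (this is forced by \cite[Corollary 6.6]{hanselman2016bordered}, since $2k$ is the order of the rational longitude). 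In other words, after isotopy in $(T_z,\epsilon)$, $\beta$ is a geodesic that stays in a fixed horizontal band and ``bounces off'' the neighborhood $\nu_\epsilon(z)$ the minimal number of times forced by homology.

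Next I would turn to $\gamma_\s$. By the same lift, $\gamma_\s$ has Alexander grading $k$ corresponding to $l$ with $kl = $ (value forced by $f_M(\s)=k$); crucially $\gamma_\s$ also wraps $2k$ times around $\R/\Z$ (again \cite[Corollary 6.6]{hanselman2016bordered}), and because $\s$ is self-conjugate its lift is symmetric under the height-negating involution (the remark after \cite[Proposition 56]{hanselman2018heegaard} about minimum height being minus maximum height). The key structural claim is that $\beta$ is obtained from $\gamma_\s$ by a height-shrinking homotopy — $\gamma_\s$ may wander to larger heights and have extra local maxima/minima, but after isotoping it to its geodesic representative and then ``pushing it down'' toward the horizontal axis, every such excursion only creates intersections with any test curve $\alpha$, never removes them. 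I would make this precise using the standard picture for immersed curves in $\overline{T_z}$: lift $\alpha$ to $\overline{T_z}$, put everything in geodesic position, and observe that $i(\alpha,\gamma_\s)$ counts lattice-type intersections in the universal cover; pushing $\gamma_\s$ down to the horizontal axis (while keeping it a geodesic of the right metric, using Lemma~7.1 to guarantee minimal transverse intersection) can only decrease the count, and the result of that push-down is a curve realizing the same intersection data as $\beta$.

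The main obstacle I expect is making the ``push-down decreases intersections'' step rigorous rather than picture-based: one needs to know that a self-conjugate component of a rational homology solid torus with the prescribed data genuinely contains $\beta$'s geodesic ``inside'' it in a monotone sense, i.e.\ that the excursions of $\gamma_\s$ away from the horizontal band come in symmetric matched pairs that each contribute a pair of intersections with $\alpha$ which survive the homotopy. I would handle this by a careful case analysis on the local picture of $\gamma_\s$ at each height-critical point, using self-conjugacy to pair up an excursion above the axis with one below, and using the fact that $\beta$ (from the figures) is exactly the curve with no such excursions. A clean way to organize this is: (i) reduce to the case where $\alpha$ is a single straight line of rational slope in $T_z$, since every immersed multicurve's geodesic representative is a union of ``straight-ish'' arcs and the inequality is additive over components and local near $\nu_\epsilon(z)$; (ii) for such an $\alpha$, compute $i(\alpha,\beta)$ directly from the figure — it equals $2k$ times a slope-dependent quantity plus a contribution from wrapping around $z$; (iii) bound $i(\alpha,\gamma_\s)$ below by the same quantity by counting, for each of the $2k$ strands of $\gamma_\s$ crossing $\{0\}\times\R$, at least one intersection with $\alpha$ of the type that $\beta$ also realizes, and arguing the symmetric excursions only add more. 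Steps (i) and (ii) are routine; step (iii), and in particular verifying that nothing cancels, is where the real work lies, and I would lean on the minimality clause of Lemma~7.1 (distinct geodesics intersect minimally) to rule out cancellation.
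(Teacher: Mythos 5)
Your overall strategy (homotope $\gamma_\s$ to $\beta$ while controlling intersections with $\alpha$, exploiting the conjugation symmetry of a self-conjugate $\spin^c$ structure) is the same in spirit as the paper's, but the step you yourself flag as the real work is exactly where the argument breaks, and your proposed repair does not fix it. The claim that pushing $\gamma_\s$ down toward the horizontal band ``can only decrease the count'' is false for the individual moves you would have to perform: to reduce the height spread you must homotope a strand of $\gamma_\s$ past a peg, and such a move can strictly \emph{increase} the geometric intersection number with $\alpha$ --- this is the corner of type~\eqsubref{fig:typea} in Figure~\ref{fig:overpegs}, and it is precisely why move~\eqref{move:overpegs} is the delicate one in the paper and in \cite[Theorem 7.15]{hanselman2016bordered}. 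The paper's mechanism is not monotonicity of each local move but a cancellation forced by symmetry: since $f_M(\s)=k$ makes the lifted curve set-wise invariant under the height-negating involution, every move over the top peg is performed simultaneously with its mirror over the bottom peg, and a type~\eqsubref{fig:typea} corner at height $n$ is always paired with a type~\eqsubref{fig:typed} corner at height $-n$, so the net change in $i(\alpha,\gamma_i)$ is zero. You allude to ``symmetric matched pairs,'' but you never isolate this compensation, and your fallback (i)--(iii) does not supply it: the reduction in (i) to $\alpha$ a single straight line of rational slope is not legitimate, because $\alpha$ is an arbitrary immersed multicurve (in the intended application it is the immersed-curve invariant of an arbitrary rational homology solid torus, which can wind around $z$), and geometric intersection numbers are neither ``local near $\nu_\epsilon(z)$'' nor bounded below by counting one intersection per strand as in (iii).

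There is a second gap: you do not explain why your push-down terminates at $\beta$ rather than at some other curve with only small-angle corners near the axis --- for instance one weaving around the height-zero pegs in a different pattern, which occurs for other components of the invariant of $E_{2k}$. The paper needs the arithmetic consequence of $f_M(\s)=k$ (all heights of $\gamma_\s\cap(\{0\}\times\R)$ lie in intervals about $k \bmod 2k$, so the number of pegs between the extreme intersections is odd, and in particular $\gamma_\s$ cannot be a straight line of slope zero), followed by a final sorting argument on the $2k$ intersection points with $\{0\}\times\R$ via a sign function, again applying move~\eqref{move:overpegs} only in conjugate pairs, to land exactly on the curve of Figure~\ref{fig:E2nfn}. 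Without both the paired-corner cancellation and this termination argument, the proposal does not yield the inequality.
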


We follow the general proof strategy of Hanselman-Rasmussen-Watson to prove the case corresponding to integer homology solid tori~\cite[Theorem 7.15]{hanselman2016bordered}.

\begin{proof}

   Consider the immersed curves $\gamma_\s,\alpha$ as described in the statement of the Lemma in the marked torus $T_z$. We describe an algorithm which produces a sequence of curves $(\gamma_i)$ beginning with $\gamma_{\s}$ and terminating with $\beta$, which has the property that $i(\gamma_i,\alpha)$ is a non-increasing function.

    The first step is to delete all components of $\gamma_\s$ that are null-homotopic in $T$ and replace every non-trivial local system on $\gamma_\s$ with the trivial one of the same dimension. Now, we fix a small $\epsilon$ and work in an {$\epsilon$-pegboard} diagram for $\overline{T_z}$ --- i.e. equip $\overline{T}_z$ with a Riemannian metric as discussed in Section~\ref{sec:background}. We claim that the conjugation symmetry of $\gamma_i$ fixes $\gamma_i$ set-wise. To see this, first note that since the conjugation symmetry is an isometry, the image of an $\epsilon$-geodesic is an $\epsilon$-geodesic. Now recall that an $\epsilon$-geodesic representative of a curve $\gamma_i$ is either unique or $\gamma_i$ is isotopic to a straight line in the flat part of $\overline{T_z}$~\cite[Lemma 7.1]{hanselman2016bordered}. But, given that $\gamma_i$ is isotopic to itself under the conjugation symmetry, if $\gamma_i$ were isotopic to a line of fixed slope in the flat part of $\overline{T_z}$, this would force the slope to be zero, and indeed that $\langle c_1(\overline{\s},t),[\Sigma_M]\rangle\equiv 0\mod 2k$, for all $\overline{\s}\in \spin^c(Y,\gamma,\s)$, a contradiction.

    Let $n(\gamma_i)$ denote the number of basepoints --- lifts of $z$ in $\overline{T_z}$ --- contained in the interval between the maximum and minimum height intersections $\gamma_i\cap(\{0\}\times\R)$. Note that $n(\gamma_i)$ is odd, since $${\dfrac{\langle c_1(\overline{\s},t),[\Sigma_M]\rangle}{2}\equiv k\mod 2k},$$ for all $\overline{\s}\in\spin^c(Y,\gamma,\s)$, so that the maximum and minimum height intersection points have height in the interval $(k+2Mk-\frac{1}{2},k+2Mk+\frac{1}{2})$ for some $M\in\Z$, and if two intersection points $\gamma_i\cap(\{0\}\times\R)$ are separated by $m$ basepoints on the line $\{0\}\times\R$, then the difference in the Alexander grading is $2km$.
    
    We obtain $\gamma_{i+1}$ by modifying $\gamma_i$ in such a way $n(\gamma_i)-n(\gamma_{i+1})$ is a non-negative even integer. Our algorithm terminates with a curve $\gamma_i$ such that $n(\gamma_i)=1$. It consists of applying four moves:
    \begin{enumerate}
       \item\label{move:resolve} resolving a self intersection in $\gamma_i$ to split of a closed component, 
        \item\label{move:delete}  deleting  components of $\gamma_i$ that are null-homotopic in $T$,
        \item\label{move:pegboard}  homotoping $\gamma_i$ to put it in $\epsilon$-pegboard position,
        \item\label{move:overpegs}  homotoping $\gamma_i$ through either the maximum or minimum height peg.
    \end{enumerate}

   Here by ``peg" we mean a lift of the basepoint $z$ to $\overline{T_z}$.  In applying these moves, we are always careful to respect the conjugation symmetry of $\gamma_i$: when we apply move~\ref{move:resolve} to an intersection point $x$, we also apply move~\ref{move:resolve} simultaneously at the image of $x$ under the conjugation symmetry.  When we apply move~\ref{move:delete} to remove a null-homotopic component $\gamma_i'$, we also apply move~\ref{move:delete} simultaneously to remove the image of $\gamma_i'$ under the conjugation symmetry. When we apply Move~\ref{move:overpegs} to isotope a curve segment $c$ over a peg $z$, we always apply Move~\ref{move:overpegs} to isotope the image of $c$ under the conjugation symmetry over the image of the peg $z$ under the conjugation symmetry. Move~\ref{move:pegboard} always results in a curve fixed by the conjugation symmetry, since $n(\gamma_i)$ by the same argument as used above.
    
   Note also that the first three moves do not increase $i(\alpha,\gamma_i)$. The final move may increase $i(\alpha,\gamma_i)$, so we have to apply it carefully, in a similar fashion to the corresponding steps in the proof of~\cite[Theorem 7.15]{hanselman2016bordered}, but where we must also account for the symmetry.

 Observe that after appropriate applications of move~\ref{move:resolve}, and move~\ref{move:delete}, we may assume that no component of $\gamma_i$ is homotopic to a component of $\alpha$. Thus, by an application of move~\ref{move:pegboard}, we may assume that $\alpha$ and $\gamma_i$ intersect minimally in $\epsilon$-pegboard position. Applications of move~\ref{move:resolve}, and move~\ref{move:delete} allow us to assume that the angle of $\gamma_i$ changes by at most $\pi$ between approaching and leaving any fixed peg.
\begin{figure}

\begin{subfigure}[b]{0.4\textwidth}\centering

    \begin{tikzpicture}

     \fill[black] (0, 0) circle (2pt);

           \draw[green, thick]  (-1,-0.5) .. controls (0, 0.5) and (0,0.5).. (1, -0.5);

             \draw[red, thick]  (-1,-0.9) .. controls (0, 0.5) and (0,0.5).. (1, -0.9);
        \draw[thick]  (-1,-1) rectangle (1,1);
   
    \end{tikzpicture}
   \caption{}\label{fig:typea}
      \end{subfigure}
      \begin{subfigure}[b]{0.4\textwidth}\centering

    \begin{tikzpicture}

     \fill[black] (0, 0) circle (2pt);

           \draw[green, thick]  (-1,-0.5) .. controls (0, 0.4) and (0,0.4).. (1, -0.5);

             \draw[red, thick]  (-1,-0.3) .. controls (0, 0.4) and (0.2,0.4).. (1, -0.9);
        \draw[thick]  (-1,-1) rectangle (1,1);
   
    \end{tikzpicture}
   \caption{}\label{fig:typeb}
      \end{subfigure}
      \begin{subfigure}[b]{0.4\textwidth}\centering

    \begin{tikzpicture}

     \fill[black] (0, 0) circle (2pt);

           \draw[red, thick]  (-1,-0.5) .. controls (0, 0.5) and (0,0.5).. (1, -0.5);

             \draw[green, thick]  (-1,-0.9) .. controls (0, 0.5) and (0,0.5).. (1, -0.9);
        \draw[thick]  (-1,-1) rectangle (1,1);
   
    \end{tikzpicture}
   \caption{}\label{fig:typec}
      \end{subfigure}
          \begin{subfigure}[b]{0.4\textwidth}\centering

    \begin{tikzpicture}

     \fill[black] (0, 0) circle (2pt);

           \draw[red, thick]  (-1,0.5) .. controls (0, -0.5) and (0,-0.5).. (1, 0.5);

             \draw[green, thick]  (-1,-0.9) .. controls (0, 0.5) and (0,0.5).. (1, -0.9);
        \draw[thick]  (-1,-1) rectangle (1,1);
   
    \end{tikzpicture}

    \caption{}\label{fig:typed}
    \end{subfigure}

\caption{Components of $\alpha$ and $\gamma_i$ near a maximal height peg. $\alpha$ is shown in red and $\gamma_i$ in green. Note that the $\alpha$-curves may wrap around the peg.}\label{fig:overpegs}

\end{figure}
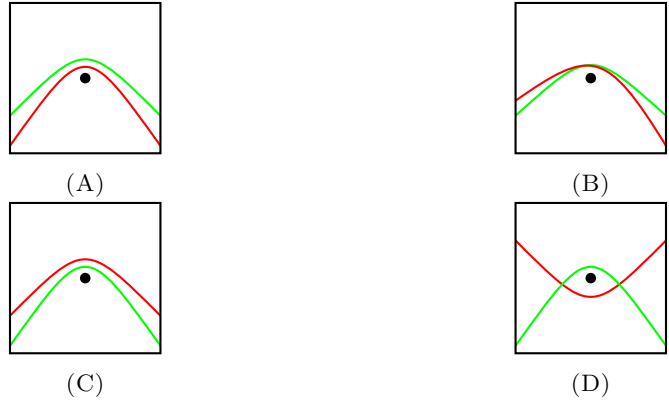
    
    We now study the behavior of $\gamma_i$ under an application move~\ref{move:overpegs} near the peg of height $n$ and a symmetric application of move~\ref{move:overpegs} near the peg at height $-n$.  Observe that in a neighborhood $N$ of the peg at height $n$, a component of $\gamma_i\cap N$ may take one of the four forms shown in~\cite[Figure 55]{hanselman2016bordered}, or Figure~\ref{fig:overpegs} above. Consider the conjugation symmetry of $\gamma_i$. This interchanges corners of type~\ref{fig:typea} and~\ref{fig:typed}, and interchanges corners of type~\ref{fig:typeb} and~\ref{fig:typec}. Now, observe that applying~\ref{fig:typec} at a peg of height $n$ and~\ref{fig:typeb} at a peg of height $-n$ has no effect on the intersection number. Likewise, while applying~\ref{fig:typea} increases the intersection number, simultaneously applying~\ref{fig:typea} at a peg of height $n$ and~\ref{fig:typed} at a peg of height $-n$ has no effect on the intersection number.

    We apply this algorithm until we are left with an immersed curve $\gamma_i$ supported in a neighborhood of $S^1\times\{0\}$ that contains only $\pi$-corners --- i.e. the slope of the curve is close to $0$. There are many such curves  --- for example, the components of the immersed curve of $E_M$, with the exception of the one depicted in Figure~\ref{fig:E2nf0} ---  however each can be transformed to the desired form --- i.e. as in Figure~\ref{fig:E2nfn} --- by further applications of move~\ref{move:overpegs}. To see this, consider the collection of intersection points $(\{0\}\times\R)\cap\gamma_i$. Observe that $2k=|\gamma_i\cap (\{0\}\times\R)|$. Endow $\gamma_i\cap (\{0\}\times\R)$ with a $\Z_{2k}$ ordering. Define a function $g:(\{0\}\times\R)\cap\gamma_i \to\{\pm1\}$ according to whether the intersection point lives above or below $S^1\times\{0\}$. If $g(x)=1$ for $0\leq x< k$, then we are done. If not, we can apply move~\ref{move:overpegs} to the intersection point $x$ corresponding to the first $0\leq i<k$ for which $g(x)=-1$, as well as to the intersection point that is the image of $x$ under the conjugation action, which is necessarily of the opposite sign. Applying this argument repeatedly results in the desired curve. \end{proof}

We now give various rank inequalities for Heegaard Floer homology. We begin by giving a mild reinterpretation of of~\cite[Theorem 7.15]{hanselman2016bordered}.  We also set some notation; given a decomposition $M=M_1\cup M_2$ and $\spin^c$ structures $\s_i\in\spin^c(M_i)$,   we set $\spin^c(M,\s_1,\s_2):=\{\spin^c(M):\s|_{M_i}=\s_i\}$.
\begin{theorem}[{\cite[Theorem 7.15]{hanselman2016bordered}}]\label{thm:restatement}
      Suppose $N$ is obtained from $M=M_1\cup M_2$ by a $k$-pinch of $M_2$ to $E_{k}$ for some $k$. Let $\s_0\in\spin^c(E_{k})$ be the $\spin^c$ structure with $f_{E_{k}}(\s_0)=0$, and $\s_{M_2}$ be any self-conjugate $\spin^c$ structure with $f_{M_2}(\s_{M_2})=0$. Then for every $\s_{M_1}\in\spin^c(M_1)$ we have that: \begin{align*}\rank\left(\underset{\s\in\spin^c(M,\s_{M_1},\s_{M_2})}{\bigoplus}\widehat{\HF}(M,\s)\right)\geq \rank\left(\underset{\s\in\spin^c(N,\s_{M_1},\s_{0})}{\bigoplus}\widehat{\HF}(N,\s)\right).\end{align*}
\end{theorem}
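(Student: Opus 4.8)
The plan is to deduce Theorem~\ref{thm:restatement} by packaging the pieces that have already been assembled: the decomposition of $\widehat{\HF}$ into a Lagrangian Floer homology over the immersed curve invariants, the $\spin^c$-refined version of that decomposition via lifts to $\overline{T_z}$, and Lemma~\ref{lem:generallemma}, which supplies the key intersection-number inequality. First I would recall from Equation~\ref{eq:Hfcompute} and the discussion following Lemma~7.1 that, after lifting the relevant components of $\gamma_{M_2}$ and $\gamma_{E_k}$ to $\overline{T_z}$ (the lift depending on the $\spin^c$-structure), the rank of $\widehat{\HF}$ in a given $\spin^c$-structure on the glued-up manifold is computed as a Lagrangian intersection Floer homology between the lift of the relevant component of one side and the union of all lifts of the gluing curve from the other side. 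The point is that restricting to $\s_{M_1}$ on $M_1$ and a fixed self-conjugate $\s_{M_2}$ with $f_{M_2}(\s_{M_2})=0$ (resp.\ $\s_0$ on $E_k$ with $f_{E_k}(\s_0)=0$) is exactly the same as fixing which lift of the $M_2$-curve (resp.\ $E_k$-curve) we use; the direct sum over $\s\in\spin^c(M,\s_{M_1},\s_{M_2})$ on the left then corresponds to taking the full Lagrangian Floer homology of that one lifted curve against the image under $\psi^*$ of all lifts of $\gamma_{M_1}$ that restrict to $\s_{M_1}$, and similarly on the right.

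Next I would set $\alpha$ to be this union of $\psi^*$-images of lifts of the $M_1$-side curve in the component(s) over $\s_{M_1}$ — an immersed multicurve in $T_z$ (or its lift) — and observe that $\gamma_{\s_{M_2}}$ is, by hypothesis, a component of the immersed curve invariant of a rational homology solid torus with rational longitudes of order $2k$ corresponding to a self-conjugate $\spin^c$-structure; but here the relevant value is $f_{M_2}(\s_{M_2})=0$, so I must be careful about which version of Lemma~\ref{lem:generallemma} applies. The natural move is: the $\spin^c$-structures $\s_{M_2}$ with $f=0$ are precisely the ones where $\gamma_{\s_{M_2}}$ lifts so that the maximum- and minimum-height intersection points are symmetric about height $0$ and the curve is forced to be fixed setwise by conjugation; running the same four-move algorithm of Lemma~\ref{lem:generallemma} (resolving self-intersections, deleting nullhomotopic components, putting into $\epsilon$-pegboard position, homotoping over the extreme-height pegs) respecting the conjugation symmetry drives $\gamma_{\s_{M_2}}$ down to the simplest conjugation-symmetric curve with $n(\gamma_i)$ trivial in the $f=0$ class — which is exactly the component of $\gamma_{E_k}$ over $\s_0$ (the curve of Figure~\ref{fig:E2nf0}, a straight line in the flat part) — without ever increasing $i(\alpha,\gamma_i)$. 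This yields $i(\alpha,\gamma_{E_k,\s_0})\le i(\alpha,\gamma_{\s_{M_2}})$.

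Then I would assemble the chain of inequalities. Using Equation~\ref{eq:Hfcompute} applied to the right-hand decomposition $N = M_1\cup E_k$ in the $\spin^c$-structures restricting to $(\s_{M_1},\s_0)$, the rank of $\bigoplus_{\s\in\spin^c(N,\s_{M_1},\s_0)}\widehat{\HF}(N,\s)$ equals $\rank\HF(\alpha,\gamma_{E_k,\s_0})$, and since Lagrangian Floer homology between curves computes to the geometric intersection number when the curves are in minimal position with trivial local systems (and in any case its rank is bounded below by that intersection number), I get
\begin{align*}
\rank\!\left(\bigoplus_{\s\in\spin^c(N,\s_{M_1},\s_0)}\widehat{\HF}(N,\s)\right) = \rank\HF(\alpha,\gamma_{E_k,\s_0}) = i(\alpha,\gamma_{E_k,\s_0}).
\end{align*}
On the other side, $\rank\bigl(\bigoplus_{\s\in\spin^c(M,\s_{M_1},\s_{M_2})}\widehat{\HF}(M,\s)\bigr) = \rank\HF(\alpha,\gamma_{\s_{M_2}}) \ge i(\alpha,\gamma_{\s_{M_2}})$, and combining with $i(\alpha,\gamma_{E_k,\s_0})\le i(\alpha,\gamma_{\s_{M_2}})$ from the previous paragraph gives the claim. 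The main obstacle, and the step I would spend the most care on, is the bookkeeping that identifies "fix $\s_{M_i}$ and sum over compatible $\spin^c$-structures on the glued manifold" with "fix one lift in $\overline{T_z}$ and pair against all lifts of the other side"; this requires tracking the affine $H^2$-actions, the map $q\colon\spin^c(M,\gamma)\to\spin^c(M)$, and how the Alexander grading $kl$ of a lift matches Turaev's correspondence, essentially the content of Lemmas~\ref{lem:lem31} and~\ref{lem:spincevaluation} together with \cite[Proposition~56]{hanselman2018heegaard} — and also verifying that the terminal curve of the symmetric algorithm in the $f=0$ case is genuinely the $E_k$-component of Figure~\ref{fig:E2nf0} and not merely some other $\pi$-cornered curve. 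Once that identification is in place the rest is a formal concatenation of the established inequalities.
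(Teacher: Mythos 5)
Your argument is essentially the one the paper intends: the paper gives no independent proof of Theorem~\ref{thm:restatement}, asserting it is a direct consequence of the proof of \cite[Theorem 7.15]{hanselman2016bordered}, and that proof is exactly the conjugation-symmetric pegboard reduction you describe (the $f=0$ analogue of Lemma~\ref{lem:generallemma}, terminating in the loose $k$-fold horizontal curve of Figure~\ref{fig:E2nf0}), combined with the $\spin^c$-refined gluing formula. The one point to phrase more carefully is local systems on the $M_1$ side: $\rank\HF(\alpha,\gamma_{E_k,\s_0})$ equals the intersection count weighted by local-system dimensions rather than $i(\alpha,\gamma_{E_k,\s_0})$ (and in the loose case the geodesic-uniqueness argument for conjugation-fixedness does not literally apply, though such a component is already the target curve), but since the same weights multiply both sides of the component-wise intersection inequality your chain of inequalities still closes.
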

While this  formulation differs from --- and is indeed strictly weaker than ---  that presented in~\cite[Theorem 7.15]{hanselman2016bordered}, this formulation is a direct consequence of the proof of~\cite[Theorem 7.15]{hanselman2016bordered}.

We also have the following stronger but less general result;

\begin{theorem}\label{thm:precise}
    Suppose $N$ is obtained from $M=M_1\cup M_2$ by a $2k$-pinch of $M_2$ to $E_{2k}$ for some $k$. Let $\s_{k}\in\spin^c(E_{2k})$ be the $\spin^c$ structure $f_{E_{2k}}(\s)=k$ and $\s_{M_2}$ be any self-conjugate $\spin^c$ structure with $f_{M_2}(\s_{M_2})=k$. Then for every $\s_{M_1}\in\spin^c(M_1)$ we have that: \begin{align*}\rank\left(\underset{\s\in\spin^c(M,\s_{M_1},\s_{M_2})}{\bigoplus}\widehat{\HF}(M,\s)\right)\geq \rank\left(\underset{\s\in\spin^c(N,\s_{M_1},\s_{k})}{\bigoplus}\widehat{\HF}(N,\s)\right).\end{align*}
\end{theorem}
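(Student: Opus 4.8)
The plan is to reduce Theorem~\ref{thm:precise} to Lemma~\ref{lem:generallemma} in exactly the way Theorem~\ref{thm:restatement} follows from~\cite[Theorem 7.15]{hanselman2016bordered}, but tracking the self-conjugate $\spin^c$-structures with $f = k$ rather than $f = 0$. First I would fix the decomposition $N = M_1 \cup_\psi E_{2k}$ and $M = M_1 \cup_\psi M_2$, where $M_2$ has rational longitude of order $2k$ and $\psi$ is the gluing diffeomorphism (which is the same for both, since the $2k$-pinch restricts to a homeomorphism of the boundary torus sending rational longitude to rational longitude, by Theorem~\ref{thm:pinch}). I would then use Equation~\eqref{eq:Hfcompute} in its lifted form: for a fixed $\s_{M_1} \in \spin^c(M_1)$, the direct sum of $\widehat{\HF}(M,\s)$ over $\s \in \spin^c(M,\s_{M_1},\s_{M_2})$ is computed by the Lagrangian Floer homology $\HF(\widetilde{\gamma}_{M_1,\s_{M_1}}, \widetilde{\gamma}_{M_2,\s_{M_2}})$ of appropriate lifts to $\overline{T_z}$, whose rank is at least the geometric intersection number $i(\widetilde{\gamma}_{M_1,\s_{M_1}}, \widetilde{\gamma}_{M_2,\s_{M_2}})$; and similarly the direct sum of $\widehat{\HF}(N,\s)$ over $\s \in \spin^c(N,\s_{M_1},\s_k)$ has rank \emph{equal} to $\HF(\widetilde{\gamma}_{M_1,\s_{M_1}}, \widetilde{\beta})$, where $\beta$ is the component of the immersed curve of $E_{2k}$ with $f_{E_{2k}}(\s) = k$, as described in Figure~\ref{fig:E2nfn}.

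The key point is then that, with $\alpha := \widetilde{\gamma}_{M_1,\s_{M_1}}$ playing the role of ``any immersed multicurve in $T_z$'' and $\gamma_\s := \widetilde{\gamma}_{M_2,\s_{M_2}}$ the component of the immersed curve of $M_2$ for a self-conjugate $\spin^c$-structure with $f_{M_2}(\s_{M_2}) = k$, Lemma~\ref{lem:generallemma} gives
\[
\rank\!\left(\bigoplus_{\s\in\spin^c(N,\s_{M_1},\s_k)}\widehat{\HF}(N,\s)\right) = \rank\,\HF(\alpha,\beta) \le i(\alpha,\beta) \le i(\alpha,\gamma_\s) \le \rank\,\HF(\alpha,\gamma_\s) = \rank\!\left(\bigoplus_{\s\in\spin^c(M,\s_{M_1},\s_{M_2})}\widehat{\HF}(M,\s)\right).
\]
One subtlety to address carefully is that Equation~\eqref{eq:Hfcompute} as stated concerns the total rank over all $\spin^c$-structures; to get the refined $\spin^c$-decomposed statement I would invoke the $\spin^c$-refinement of the pairing theorem (the Floer homology splits along $\spin^c$-structures of the glued manifold, and the component $\gamma_{M,\s}$ pairs with $\gamma_{N,\s'}$ to contribute to exactly those $\spin^c$-structures of the glued manifold restricting to $\s$ and $\s'$), together with the fact that lifting to $\overline{T_z}$ organizes precisely the summand indexed by a single $\spin^c$-structure downstairs. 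The equality on the $N$ side (rather than just $\geq$) is the statement that $\beta$, being the immersed curve of $E_{2k}$ in this $\spin^c$-structure, is an embedded curve with trivial local system, so its Lagrangian Floer homology with any curve has rank equal to the geometric intersection number; this is the content of Figure~\ref{fig:E2nfn} and the analysis in the proof of Lemma~\ref{lem:generallemma}.

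The main obstacle I anticipate is bookkeeping rather than conceptual: making sure the self-conjugate hypothesis $f_{M_2}(\s_{M_2}) = k$ is exactly what is needed to apply Lemma~\ref{lem:generallemma} (it is, by construction), and checking that the $\spin^c$-structure $\s_k$ on $E_{2k}$ with $f_{E_{2k}}(\s_k) = k$ really is the one whose immersed curve component is $\beta$ — this requires matching the Alexander-grading normalization (the lift of $\gamma_\s$ to $\overline{T_z}$ is pinned down by $f_M(\s) = k$ meaning the extremal intersection heights lie in $(k + 2Mk - \tfrac12, k + 2Mk + \tfrac12)$) against the explicit curves computed from Hanselman-Watson's $\widehat{\CFD}(E_{2k})$ in the Example. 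Once that identification is in place, the inequality follows formally, exactly as Theorem~\ref{thm:restatement} is deduced from~\cite[Theorem 7.15]{hanselman2016bordered}; indeed Theorem~\ref{Thm:main} will then be the special case $k = 1$, $M_1 = N \setminus M_2$, combined with the fact that both the domain and target closed manifolds are rational homology spheres so the relevant $\spin^c$-sums exhaust $\widehat{\HF}$ after summing over the finitely many choices of $\s_{M_1}$ and the two self-conjugate structures.
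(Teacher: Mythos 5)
Your proposal is correct and follows essentially the same route as the paper: decompose $N=M_1\cup E_{2k}$, invoke the $\spin^c$-refined immersed-curve pairing theorem of Hanselman--Rasmussen--Watson, and feed the component $\gamma_{M_2,\s_{M_2}}$ (self-conjugate, $f_{M_2}=k$) and the $E_{2k}$-component $\beta$ into Lemma~\ref{lem:generallemma} to compare geometric intersection numbers with $\gamma_{M_1,\s_{M_1}}$, using that $\beta$ carries a trivial local system so its Floer pairing rank equals the intersection number. Your write-up of the chain of (in)equalities and of the $\spin^c$/lift bookkeeping is in fact more explicit than the paper's own proof, which compresses exactly these steps.
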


\begin{proof}
     Suppose $M=M_1\cup M_2$ and $N$ are as in the statement of the theorem.  We can write ${N=M_1\cup E_{2k}}$. Observe that since $M$ and $N$ are rational homology spheres, $\rank(\widehat{\HF}(M))$ and $\rank(\widehat{\HF}(N))$ can be computed as the geometric intersection number of $\gamma_{M_2}$ or $\gamma_{E_{2k}}$ with $\gamma_{M_1}$, where $\gamma_{M_1}$, $\gamma_{M_2}$   and $\gamma_{E_{2k}}$ are the immersed curve invariants for $M_1$, $M_2$, and  $E_{2k}$ respectively. $M_2$ has at least one self-conjugate $\spin^c$-structure $\s$ with $f^*(\s)=k$ by a combination of Lemma~\ref{lem:spincevaluation} and Lemma~\ref{lem:selfconjugate}. Let $\s_{k}$ be such a $\spin^s$ structure. Observe that for any of the $|H^2(M_1)|$ components of the immersed curve $\gamma_{M_1,\s}$ we have as a consequence of Theorem~\ref{Thm:main} that \begin{align*}i(\gamma_{M_1,\s},\gamma_k)>i(\gamma_{M_1,\s},\gamma_{\s_{k}}).\end{align*}

   Now Hanselman-Rasmussen-Watson's result~\cite[Theorem 1.12]{hanselman2016bordered} implies that ${\rank(\widehat{\HF}(Y_0,\s'))=i(\gamma_0,c_0)}$ while $\rank(\widehat{\HF}(Y_0,\s'))=i(\gamma_0,c_0)$, so the result follows.
\end{proof}

We now prove the rank inequality advertised in the introduction.
\begin{proof}[Proof of Theorem~\ref{Thm:main}]

Suppose $M$ and $N$ are in the statement of the theorem. Write $M=M_1\cup M_2$, as in the statement of Theorem~\ref{thm:precise}. Apply Theorem~\ref{thm:precise} to the $\spin^c$ structure $\s_1\in\spin^c(E_{2})$ with $f_{E_2}(\s_1)=1$ and every $\spin^c$ structure on $M_1$. Apply Theorem~\ref{thm:restatement} to the $\spin^c$-structure $\s_0$ on $E_2$ with $f_{E_2}(\s_0)=0$ and every $\spin^c$ structure on $M_1$. The result follows.\end{proof}

We conclude the paper with a remark on Conjecture~\ref{ref:conjecture}.

\begin{remark}\label{rem}
Of course, there is a version of the algorithm presented in the proof of Lemma~\ref{lem:generallemma} for $\gamma_\s\cup\gamma_{-\s}$ for an arbitrary $\spin^c$ structure $\s$ on an arbitrary rational homology solid torus. However, if $\s$ is, say, non self-conjugate, the authors see no reason why the algorithm would not terminate in a collection of loose curves. On the other hand, for $\s$ a non-self conjugate $\spin^c$ structure on the manifold $E_n$, $\gamma_\s$ is non-loose. It is therefore unclear to us how to prove a generalization of Lemma~\ref{lem:generallemma} and therefore, in turn, it is  unclear to us how to generalize the proof of Theorem~\ref{Thm:main}.
\end{remark}

\bibliographystyle{alpha}
\bibliography{bibliography}
\end{document}